\DeclarePairedDelimiter\innerproduct{\langle}{\rangle}
\DeclareMathOperator*{\esssup}{ess\,sup}
\newtheorem{lem}{Lemma}[section]
\newtheorem{thm}{Theorem}[section]
\newtheorem{corollary}{Corollary}[thm]
\newtheorem{remark}{Remark}
\newtheorem{definition}{Definition}[section]
\newcommand{\mycomment}[1]{}
\title{Existence and Uniqueness of Solution to Unsteady Darcy-Brinkman Problem with Korteweg Stress for Modelling Miscible Porous Media Flow }
\author{ Sahil Kundu, Surya Narayan Maharana, Manoranjan Mishra \\
  Department of Mathematics, Indian Institute of Technology Ropar, Rupnagar, India 
  }
\date{ 24 May, 2024}
\begin{document}
\maketitle

\begin{abstract}
	The work investigates a model that combines a convection-diffusion-reaction equation for solute concentration with an unsteady Darcy-Brinkman equation for the flow field, including the Kortweg stress. Additionally, the flow field experiences an external body force term while the permeability fluctuates with solute concentration. Such models are used to describe flows in porous mediums such as fractured karst reservoirs, mineral wool, industrial
foam, coastal mud, etc. The system of equations has Neumann boundary conditions for the solute concentration and no-flow conditions for the velocity field, and the well-posedness of the model is discussed for a wide range of initial data. The proofing techniques remain applicable in establishing the well-posedness of non-reactive and homogeneous porous media flows under the specified simplifications.
\end{abstract}

% keywords can be removed
\keywords{: Darcy-Brinkman \and Korteweg Stress \and Miscible flow \and Reactive flow \and Precipitation \and Well-posedness \and Existence and Uniqueness}

\section{Introduction}
 Darcy equation is an empirical law proposed by Henry Darcy and is widely used for understanding porous media flows. Several environmental, industrial, and biological entities possess porous structures, and humongous literature investigates Darcy Law's applications when a flow occurs in these structures. CO2 sequestration, groundwater contamination, enhanced oil recovery from natural reservoirs, and tumor growth analysis are chief among them, where the Darcy equation is used for investigations \cite{Huppert2014, Wu2016, Allen98}. Even though Darcy equation-based models enjoy immense geophysical significance (relevant for flows in sinkholes, contaminant transport, and karst aquifers), they are not appropriate for highly porous media with porosity greater than 0.75 \cite{Mccurdy2019}. For example, our globe is filled with chemically active or reactive carbonate rocks (commonly found in karst reservoirs) vulnerable to rock dissolution, creating vugs with pore volumes several orders of magnitude greater than the standard intergranular spaces \cite{Daccord1987,szymczak_ladd_2014,hallack19}. Viscous flows in mediums enriched with such micro and macro length scales are best understood through Brinkman's equation, which is a correction to Darcy's equation that additionally includes Laplacian of the velocity vector \cite{brinkman1949calculation}. The equation is given by $\dfrac{\mu}{K} \boldsymbol{u} = - \boldsymbol{\nabla} p + \mu_{e}\Delta \boldsymbol{u}$, where ``$\boldsymbol{u}$", ``$p$", ``$\mu$", ``$K$" and ``$\mu_{e}$" denote the fluid's velocity, pressure,  fluid's viscosity, the permeability of porous media, and effective viscosity, respectively. The effective viscosity ``$\mu_{e}$" strongly depends both on the porous media structure and the flow field's strength, which can be greater or less than the fluid viscosity ``$\mu$" \cite{krotkiewski2011importance,vafai2005}. 

Steady Darcy-Stokes-based models are extensively used to explain coupled free and porous media flows. For instance, the mixing of surface and ground water \cite{Layton2002}, cancer cell migration \cite{EVJE2017,Evje2018, Qiao19}, platelet aggregation \cite{link2020}, flow in porous-conduit system \cite{Hou16,Babuska10}, etc. These coupled models are comprised of the Darcy equation in one domain while the Stokes equation in the other, connected through proper mass and force balance laws like Beavers--Joseph (BJ) interfacial condition (or some variants of it) to describe the flow; see \cite{chen2010asymptotic, HOU2019, cai2009, Cao2011, FENG2012453, Girault2009} and references therein. Numerical handling of such a two-domain problem is highly non-trivial, and the computational cost is huge. Thanks to homogenization theory that paved the way out and established the theory that two-domain problems with complex interfacial conditions can be avoided by solving Brinkman's equation in a single domain \cite{Allaire1991, Greibel2010, brown2015, Cesmelioglu2009}. Indeed, the steady Brinkman equation is used to analyze non-Newtonian fluids \cite{BULICEK2015109},  flows in mineral wool and industrial textiles/foam \cite{Iliev11}, local and non-local growth of tumors in \cite{Ebenbeck2019,1937-1632_2021_11_3989}, chromatography separation \cite{CARTA199262}, etc. Moreover, with proper non-dimensionalization of the Brinkman model, Stokes and Darcy's problems are recovered when $\mu_{e} = 1$ (or infinite permeability, $K \rightarrow \infty$ ) and $\mu_{e} = 0$, respectively \cite{mu2020, Badia2009}.

Although the inclusion of the time derivative of velocity is debated for being negligible compared to other terms \cite{Mccurdy2019, Vafai2015}, the unsteady Darcy-Brinkman equation has been used and scrutinized by several numerical analysts \cite{Hou16,John2015,keim2016,Layton2013}. The applications of unsteady Darcy-Brinkman equation based models can be realised in biological processes and petroleum extraction in karst reservoirs \cite{krotkiewski2011importance,fritz2019unsteady, ligaarden2010stokes,popov2009multiphysics}. In 1901, Korteweg proposed that a body force caused by a concentration ( or density ) could act like an interfacial tension \cite{Korteweg1901}. Also experimentally, it has been found that steep gradient in concentration profiles leads to interfacial tension \cite{Pojman2006}. This force or tension is known as Korteweg stress, and it can control hydrodynamic instability such as viscous fingering \cite{SWERNATH20102284,mishra_2022,deki2023numerical} and convection in miscible polymer-monomer systems \cite{Pojman2009}. In this paper, using the Galerkin method, we prove the existence and uniqueness of weak solutions for problems that use the unsteady Darcy-Brinkman equation as a flow equation with Korteweg stress coupled to transport equations accounting for fluid's miscibility or reactive nature. Thus, the techniques used in the current article are also valid for purely unsteady Darcian or Stokes flow with the aforementioned limiting cases.

Fluid's miscible transport is commonly modeled by coupling a convection-diffusion equation
representing the conservation of solute concentration with the flow equations (conservation laws of mass and momentum). Reactive transport modeling is done through a change in the convection-diffusion equation to a convection-diffusion-reaction system for conserving concentration representatives of underlying reactive species. See \cite{anne2020} for various kinds of reactions in the literature and their chemo-hydrodynamic effects on pattern modulations. This article also examines solutions' existence and uniqueness when the chemical species undergoes a simple first-order reaction. Moreover, this reactive flow is analyzed under a forced flow field \cite{allali2015model, allali2017existence, migorski2019nonmonotone}, where an external body force term is considered in the Darcy-Brinkman equation. The body force can be realised as gravity or bouncy force in practicality, which comes to model by Boussinesq approximation and exhibits massive significance in the context of geophysical flows \cite{smyth_carpenter_2019}.

The present study focuses on modeling in a porous media flow with high porosity, specifically reactive karst reservoirs, by considering the Darcy-Brinkman equation as the governing equation. In such reservoirs, invading fluids react with the solid phase, which increases the porosity of the medium. Porosity can be a function of volume fraction or fluid concentration, which presents a problem in investigating the existence and uniqueness of solutions when the porosity becomes a function of concentration in the Darcy-Brinkman equation.

Experimental observations have shown that a precipitation reaction can increase the porosity of the medium, thus affecting convection and hydrodynamic instabilities \cite{White2012,Nagatsu2014}. The authors of \cite{Nagatsu2014} have validated their experimental results by using the numeric, taking the permeability as an exponentially decaying function of the product fluid's concentration $C$, i.e., $K(C)= e^{-2 R_{c} C}$, where $R_{c}$ is a non-negative number. In this article, we examine the uniqueness and existence of solutions to the generalized version of the non-reactive model, where the permeability is considered to be a reciprocal of the linear function of the solute's concentration.

The rest of the paper is structured as follows: Section \ref{sec2} introduces functional spaces and preliminary results that will be used throughout the article. Section \ref{sec:math_model} presents the fluid flow model, which consists of the Brinkman equation with an imposed external body force term coupled to convection-diffusion-reaction (CDR) and continuity equations. The fluid is assumed to be incompressible, Newtonian, and of constant viscosity, while the porous media is assumed to be heterogeneous. This heterogeneity is introduced through precipitation, where the permeability becomes a function of ``$C$" rather than a constant. The weak formulation of the model is also defined in this section.
Section \ref{sec4} focuses on obtaining priori estimates for the velocity vector ``$\boldsymbol{u}$" and solute concentration ``$C$", and proving the existence and uniqueness of the model presented in section \ref{sec:math_model}.

\section{Notations and Auxiliary results}\label{sec2} In this section, we recall some standard notations and existing important results that will be used frequently throughout the paper. We denote $(\cdot, \cdot)_{X}$ as the inner product for a given inner product space $X$. For a Banach space $X$ and its dual space $X^{*}$, the duality pairing is denoted by $\langle\cdot, \cdot\rangle_{X}$. For notational convenience, we denote the duality pairing by $\innerproduct{.,.}$ for $X = \left(H_{0}^{1}\right)^2$. We define the scalar product of two matrices by $$\mathbf{A}:\mathbf{B} =  \sum_{i,j=1}^{n} a_{ij}b_{ij} ~ ~ \text{ for } \mathbf{A}, \mathbf{B} \in \mathbb{R}^n . $$ 

Let $\Omega$ denote an open bounded subset of $\mathbb{R}^2$  with $\mathcal{C}^{1}$ boundary $\partial \Omega $. Usually, the standard Sobolev spaces for $k\in \mathbb{N}$ are defined as,
$$ W^{k,p} = \left\{\phi \in L^{p}(\Omega): D^{\alpha}\phi \in L^{p}(\Omega) \hspace{5pt} \forall |\alpha| \leq k \right\},$$
where $ D^{\alpha}f = \dfrac{\partial^{\alpha}f}{\partial x^{\alpha_{1}}_{1}....\partial x^{\alpha_{n}}_{n}}$ denotes mixed partial derivative of order $\alpha$ in the distributional sense. The space ``$ W^{k,p}$" is equipped with the following norm,
$$
\|\phi\|_{W^{k, p}(\Omega)}:= \begin{cases}\sum_{|\alpha| \leq k}\left\|D^{\alpha} \phi \right\|_{L^{p}(\Omega)} & 1 \leq p<\infty ; \\ \max_{|\alpha| \leq k}\left\|D^{\alpha} \phi \right\|_{L^{\infty}(\Omega)} & p=\infty .\end{cases}
$$
For the fixed value $p = 2$, we denote $ W^{2,k}(\Omega)$ by $H^{k}(\Omega)$. So the norm $
\|\cdot\|_{W^{k, p}(\Omega)}$ = $\|\cdot\|_{H^{k}(\Omega)}$. The elements of $H^{k}(\Omega)$ that vanish at the boundary $\partial \Omega$ are in the space $H^{k}_{0}(\Omega)$, i.e., $H^{k}_{0}(\Omega)=\{ \phi \in H^{k}(\Omega) : \phi=0 \hspace{.2 cm} \text{on} \hspace{.2 cm} \partial  \Omega \} $. We denote $H^{-k}(\Omega)$  as the dual space of $H^{k}_{0}(\Omega)$.
We further define the spaces $ S_{1}, V_{1}, V_{2}  $ as follows:
\begin{center}
$\begin{aligned}
{S_{1}} &= \left\{\boldsymbol{v} \in \left(L^{2}(\Omega)\right)^{2} : \boldsymbol{\nabla} \cdot \boldsymbol{v}=0 , (\boldsymbol{v}\cdot \boldsymbol{\eta})|_{\partial \Omega} = 0    \right\}, \\ 
{V_{1}} &= \left\{\boldsymbol{v} \in\left(H^{1}_{0}(\Omega)\right)^{2} : \boldsymbol{\nabla} \cdot \boldsymbol{v} = 0 \right\}, \\
V_{2} &= \left\{ B \in H^{2}(\Omega) : \dfrac{\partial B}{\partial \boldsymbol{\eta}}\bigg|_{\partial \Omega} = 0  \right\}.
\end{aligned}$
\end{center}
Here ``$ \boldsymbol{\eta} $" denotes the outward unit normal vector. For the velocity vector $\boldsymbol{v} \in S_{1}$ or $V_{1}$, the divergence is meant in the distributional sense.  
We further let
$$ {L^{p}(a, b ; Y)} = \left\{ \phi:[a, b] \rightarrow Y : \|\phi\|_{L^{p}(a, b ; Y)}=\left(\int_{a}^{b}\|\phi\|_{Y}^{p} \, \mathrm{d} t \right)^{\frac{1}{p}}  < \infty      \right\} ~ \text{for} ~1 \leq p < \infty, $$ 
and 
$${L^{\infty}(a, b ; Y)} = \left\{ \phi:[a, b] \rightarrow Y :  \|\phi\|_{L^{\infty}(a, b ; Y)}= \esssup_{[a, b]}\|\phi\|_{Y} < \infty      \right\},$$
where $Y$ is given as a Banach space equipped with the norm $\| \cdot \|_{Y}$.
Let $S_{\boldsymbol{u}} = L^{2}(0,T;V_{1}) $ and  $S_{C} = L^{2}(0,T;V_{2})$. The space, $\mathcal{C}([0,T];X)$ contains all the continuous functions $\phi : [0,T] \rightarrow X$ with $\| \phi\|_{ \mathcal{C}([0,T]};X)  = \max_{0\leq t \leq T} \|\phi(t)\|_{X} < \infty$.
we will use $\left(. , . \right)$ to denote the standard inner product in $L^2(\Omega)$ as well as in $(L^2(\Omega))^2$.
Throughout this paper, we will use $M< \infty$ as a generic constant.

\begin{thm}[Grönwall's inequality, cf. {\cite[Lemma~3.1]{fritz2019unsteady}}] \label{GR}
Let $f, g \in \mathcal{C}\left([0, T] ; \mathbb{R}_{\geq 0}\right)$. If there are constants $0 \leq C_1, C_2<\infty$ such that
$$
f(t)+g(t) \leq C_1+C_2 \int_0^t f(s) \mathrm{d} s \quad \text { for all } t \in[0, T],
$$
then it holds that $f(t)+g(t) \leq C_1 e^{C_2 T}$ for all $t \in [0, T]$.
\begin{remark}
A more generalized version of Grönwall's inequality can be seen in the appendix of \cite{evans2022partial}, where $f$ is only considered to be a nonnegative summable function on [0, T].

\end{remark}
\end{thm}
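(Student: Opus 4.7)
The plan is to reduce the statement to the classical integral form of Grönwall's lemma by first discarding the nonnegative term $g$, and then re-injecting the resulting bound. Since $g(t) \geq 0$ on $[0,T]$ by hypothesis, the given inequality immediately yields
\[
f(t) \leq f(t) + g(t) \leq C_1 + C_2 \int_0^t f(s) \, \mathrm{d}s, \qquad t \in [0,T].
\]
This is a pure inequality in $f$ alone, of the standard form to which the classical Grönwall argument applies.

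Next, I would apply the standard differential trick. Set
\[
h(t) := C_1 + C_2 \int_0^t f(s) \, \mathrm{d}s,
\]
so that $h \in \mathcal{C}^1([0,T])$ with $h(0) = C_1$ and $h'(t) = C_2 f(t)$. Using $f(t) \leq h(t)$ from the previous step, we get $h'(t) \leq C_2 h(t)$, and then $\frac{\mathrm{d}}{\mathrm{d}t}\bigl(e^{-C_2 t} h(t)\bigr) \leq 0$. Integrating from $0$ to $t$ gives $h(t) \leq C_1 e^{C_2 t}$.

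Finally, I would feed this bound back into the original inequality: since $f(t) + g(t) \leq h(t)$ for every $t \in [0,T]$, we conclude
\[
f(t) + g(t) \leq C_1 e^{C_2 t} \leq C_1 e^{C_2 T}.
\]
The only subtle point is keeping $g$ in the picture at the end rather than losing it when we pass to the Grönwall step; this is handled for free because $f+g$ is bounded by the same auxiliary function $h$ as $f$ is. There is no real obstacle — the continuity assumption is used only to justify differentiating $h$ and to make the estimates pointwise on $[0,T]$; a more general version for merely summable $f$, as noted in the remark, requires Lebesgue differentiation theory but is not needed here.
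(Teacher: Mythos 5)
Your proof is correct: dropping the nonnegative $g$ to get a pure integral inequality in $f$, bounding $f$ by the auxiliary function $h(t) = C_1 + C_2\int_0^t f(s)\,\mathrm{d}s$ via the standard differential trick, and then re-inserting $f+g \leq h$ at the end is exactly the classical argument. The paper itself states this lemma without proof, citing the literature, and your argument is the standard one that such a citation stands in for, so there is nothing to reconcile.
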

\begin{thm}[Gagliardo–Nirenberg, cf. { \cite[Lemma~1]{migorski2019nonmonotone}}, { \cite[Lemma~1.1]{garcke2019}}]\label{gagliardo} 
If $\Omega \subset \mathbb{R}^n \,(n=2,3)$ is a domain with $\mathcal{C}^{1}$ boundary, then there exists a constant $M>0$ depending only on $\Omega$ such that, in the case $n=2$:
$$
\|\boldsymbol{\phi}\|_{L^{4}} \leq M\|\boldsymbol{\phi}\|_{L^{2}}^{1 / 2}\|\boldsymbol{\nabla} \boldsymbol{\phi}\|_{L^{2}}^{1 / 2} \hspace{5pt};\hspace{5pt} \forall \hspace{5pt} \boldsymbol{\phi} \in H^{1}(\Omega)
$$
and in the case $n=3$:
$$
\|\boldsymbol{\phi}\|_{L^{4}} \leq M\|\boldsymbol{\phi}\|_{L^{2}}^{1 / 4}\|\boldsymbol{\nabla} \boldsymbol{\phi}\|_{L^{2}}^{3 / 4}\hspace{5pt};\hspace{5pt} \forall \hspace{5pt} \boldsymbol{\phi} \in H^{1}(\Omega).
$$  
\end{thm}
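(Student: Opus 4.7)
The plan is to prove each inequality first on $\mathbb{R}^n$ for smooth compactly supported functions, then transfer to $H^1(\Omega)$ by the combination of a standard extension operator (available because $\partial\Omega$ is $\mathcal{C}^1$) and the density of $\mathcal{C}^\infty_c(\mathbb{R}^n)$ in $H^1(\mathbb{R}^n)$. Specifically, I would pick a bounded linear extension $E:H^1(\Omega)\to H^1(\mathbb{R}^n)$ with $\|E\phi\|_{H^1(\mathbb{R}^n)}\le C_\Omega \|\phi\|_{H^1(\Omega)}$, approximate $E\phi$ by $\phi_k\in\mathcal{C}^\infty_c(\mathbb{R}^n)$ in the $H^1$ norm, establish the desired inequality for every $\phi_k$ on all of $\mathbb{R}^n$, and pass to the limit in $L^4$ (using Sobolev embedding to control $L^4$ convergence) and on the right-hand side in $L^2$ and $\dot H^1$. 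The restriction to $\Omega$ only decreases each norm on the left, so the constant from $\mathbb{R}^n$ survives up to $C_\Omega$.

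For the two-dimensional case, I would prove the pointwise bound, for any $\phi\in\mathcal{C}^\infty_c(\mathbb{R}^2)$,
\begin{equation*}
\phi(x,y)^2 \le 2\int_{\mathbb{R}} |\phi\,\partial_x\phi|(x',y)\,dx' \quad\text{and}\quad \phi(x,y)^2 \le 2\int_{\mathbb{R}} |\phi\,\partial_y\phi|(x,y')\,dy',
\end{equation*}
obtained from the fundamental theorem of calculus applied to $\phi^2$. Multiplying these two bounds, using Tonelli to factor the resulting double integral, and then applying Cauchy--Schwarz in each variable gives
\begin{equation*}
\int_{\mathbb{R}^2}\phi^4\,dx\,dy \;\le\; 4\,\|\phi\,\partial_x\phi\|_{L^1}\,\|\phi\,\partial_y\phi\|_{L^1}\;\le\; 4\,\|\phi\|_{L^2}^{2}\,\|\boldsymbol{\nabla}\phi\|_{L^2}^{2},
\end{equation*}
which is exactly the $n=2$ statement after taking fourth roots. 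This is essentially Ladyzhenskaya's inequality and is the one place where an honest calculation is required.

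For the three-dimensional case, I would invoke the Sobolev embedding $\dot H^1(\mathbb{R}^3)\hookrightarrow L^6(\mathbb{R}^3)$, namely $\|\phi\|_{L^6}\le M\,\|\boldsymbol{\nabla}\phi\|_{L^2}$, together with the Hölder interpolation $\|\phi\|_{L^4}\le \|\phi\|_{L^2}^{\theta}\|\phi\|_{L^6}^{1-\theta}$, where the exponent $\theta$ is fixed by the scaling identity $\tfrac14=\tfrac{\theta}{2}+\tfrac{1-\theta}{6}$, giving $\theta=\tfrac14$. Combining the two bounds yields $\|\phi\|_{L^4}\le M\,\|\phi\|_{L^2}^{1/4}\|\boldsymbol{\nabla}\phi\|_{L^2}^{3/4}$ on $\mathbb{R}^3$, and transferring back to $\Omega$ via the extension as above finishes the argument.

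The main obstacle is the $n=2$ estimate, since it does not reduce to a clean combination of Sobolev embedding and Hölder interpolation (the endpoint $L^\infty$ bound fails in two dimensions). Careful use of the slice-by-slice FTC representation of $\phi^2$ is what makes the argument go through, and this is the only nontrivial analytic input. Once this inequality and the $n=3$ interpolation are in hand, the passage from $\mathbb{R}^n$ to the bounded $\mathcal{C}^1$ domain $\Omega$ is routine, with the constant $M$ in the final statement absorbing both the $\mathbb{R}^n$ constants and the operator norm of the extension $E$.
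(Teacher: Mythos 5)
The paper itself does not prove this lemma; it is imported from the cited references, so there is no internal proof to compare against and the only question is whether your argument stands on its own. Its two core computations do: the slice-by-slice FTC/Tonelli/Cauchy--Schwarz derivation of $\int_{\mathbb{R}^2}\phi^4 \le 4\|\phi\|_{L^2}^2\|\boldsymbol{\nabla}\phi\|_{L^2}^2$ is the standard Ladyzhenskaya proof and is correct, and in three dimensions the combination of $\|\phi\|_{L^6(\mathbb{R}^3)}\le M\|\boldsymbol{\nabla}\phi\|_{L^2}$ with the interpolation exponent $\theta=1/4$ is also correct on the whole space.

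The gap is in the transfer to the bounded domain. A bounded extension $E:H^1(\Omega)\to H^1(\mathbb{R}^n)$ gives $\|E\phi\|_{L^2(\mathbb{R}^n)}+\|\boldsymbol{\nabla} E\phi\|_{L^2(\mathbb{R}^n)}\le C_\Omega\|\phi\|_{H^1(\Omega)}$, but it does not (and cannot) give $\|\boldsymbol{\nabla} E\phi\|_{L^2(\mathbb{R}^n)}\le C_\Omega\|\boldsymbol{\nabla}\phi\|_{L^2(\Omega)}$: the inequality you are trying to pull back is simply false on a bounded $\Omega$ for general $\phi\in H^1(\Omega)$, since a nonzero constant has vanishing right-hand side but $\|\phi\|_{L^4(\Omega)}=|\Omega|^{1/4}|\phi|>0$. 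So the sentence ``the restriction to $\Omega$ only decreases each norm on the left, so the constant survives up to $C_\Omega$'' breaks down exactly in the gradient factor. What your argument actually establishes is the inhomogeneous form $\|\phi\|_{L^4(\Omega)}\le M\|\phi\|_{L^2(\Omega)}^{1/2}\|\phi\|_{H^1(\Omega)}^{1/2}$, which is in fact the version invoked in the later estimates (e.g.\ Lemmas \ref{le43} and \ref{le44}); the gradient-only form is recovered only under an extra hypothesis, e.g.\ $\phi\in H^1_0(\Omega)$ (extend by zero, no extension operator needed, which covers the uses for $\boldsymbol{u}_n\in V_1$) or $\phi$ of zero mean after a Poincar\'e--Wirtinger reduction. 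You should either prove and state the $H^1$-norm version, or add one of these reductions; as written, the final step does not deliver the inequality claimed in the statement.
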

\begin{thm}[Aubin-Lions, cf. {\cite[Lemma~3]{migorski2019nonmonotone}} ]\label{aubin-lions}
Let $X_{1}, X_{2}$ and $X_{3}$ be reflexive Banach spaces.  If $X_{1} \subset X_{2} \subset X_{3}$ 
continuously with compact embedding $X_{1} \subset X_{2}$ and $p, q \in(1, \infty)$,  then for any $T>0$, the embedding of the space
$$
\left\{\phi \in L^{p}(0, T ; X_{1}) \mid \dfrac{\partial \phi}{\partial t} \in L^{q}(0, T ; X_{3})\right\}
$$ into $L^{p}(0, T ; X_{2})$ is compact.
\end{thm}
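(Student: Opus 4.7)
The plan is to pass from boundedness in $W := \{\phi \in L^p(0,T;X_1) : \partial_t \phi \in L^q(0,T;X_3)\}$ to strong relative compactness in $L^p(0,T;X_2)$ via two ingredients: Ehrling's interpolation inequality between $X_1, X_2, X_3$, and an Arzelà--Ascoli-type argument in $X_3$ powered by the time-derivative bound. Given a bounded sequence $\{\phi_n\} \subset W$, I would first use reflexivity of $X_1$ and $X_3$ (and of the corresponding Bochner spaces, since $1<p,q<\infty$) to extract a subsequence with $\phi_n \rightharpoonup \phi$ in $L^p(0,T;X_1)$ and $\partial_t \phi_n \rightharpoonup \partial_t \phi$ in $L^q(0,T;X_3)$; setting $w_n = \phi_n - \phi$, the target reduces to $w_n \to 0$ strongly in $L^p(0,T;X_2)$.

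The key algebraic tool is Ehrling's lemma: for every $\varepsilon > 0$ there exists $C_\varepsilon > 0$ such that
\begin{equation*}
\|v\|_{X_2} \leq \varepsilon \|v\|_{X_1} + C_\varepsilon \|v\|_{X_3} \quad \text{for all } v \in X_1.
\end{equation*}
I would establish this by contradiction: a normalized sequence with $\|v_n\|_{X_1} = 1$ violating the bound would satisfy $\|v_n\|_{X_3} \to 0$ while $\|v_n\|_{X_2}$ stays bounded below; the compact embedding $X_1 \hookrightarrow X_2$ then yields an $X_2$-convergent subsequence whose limit must vanish by continuity of $X_2 \hookrightarrow X_3$, a contradiction. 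Applying Ehrling pointwise in $t$ and raising to the $p$-th power gives
\begin{equation*}
\|w_n\|_{L^p(0,T;X_2)}^p \leq C\varepsilon^p \|w_n\|_{L^p(0,T;X_1)}^p + C C_\varepsilon^p \|w_n\|_{L^p(0,T;X_3)}^p,
\end{equation*}
so that boundedness of $w_n$ in $L^p(0,T;X_1)$ makes the first term $O(\varepsilon^p)$, reducing the whole problem to showing $w_n \to 0$ strongly in $L^p(0,T;X_3)$.

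For that final reduction, I would work with the absolutely continuous $X_3$-representative of $\phi_n$ supplied by $\partial_t \phi_n \in L^1(0,T;X_3)$ and first establish equicontinuity of $\{\phi_n\}$ in $\mathcal{C}([0,T];X_3)$ from the Hölder bound
\begin{equation*}
\|\phi_n(t) - \phi_n(s)\|_{X_3} \leq |t-s|^{1-1/q}\|\partial_t \phi_n\|_{L^q(0,T;X_3)}.
\end{equation*}
Next, I would show pointwise relative compactness of $\{\phi_n(t_0)\}$ in $X_3$ at each $t_0$ by decomposing $\phi_n(t_0)$ as a short-time Bochner average over $[t_0, t_0+h]$ plus an error controlled by the equicontinuity above: the average is uniformly bounded in $X_1$ and hence relatively compact in $X_2$, and so in $X_3$. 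Arzelà--Ascoli in $\mathcal{C}([0,T];X_3)$ combined with uniqueness of the weak limit then forces $\phi_n \to \phi$ uniformly in $X_3$, which yields the desired strong convergence in $L^p(0,T;X_3)$.

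The main obstacle I anticipate is the technical handling of pointwise values, since elements of $W$ are equivalence classes in $t$: I must carefully justify the continuous $X_3$-representative and transfer $L^p(0,T;X_1)$ bounds to pointwise $X_1$-bounds via averaging before applying compactness. The rest is assembly of Ehrling and Arzelà--Ascoli, which are both insensitive to the specific choice of $p$ and $q$ in $(1,\infty)$.
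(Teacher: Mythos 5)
The paper does not prove this theorem at all: it is imported as a known auxiliary result, quoted with a citation (Lemma~3 of the reference by Mig\'orski et al.), and then used as a black box, so there is no internal proof to compare yours against. Your reconstruction is the standard Aubin--Lions--Simon argument and its outline is sound: reflexivity of $X_1$, $X_3$ with $p,q\in(1,\infty)$ gives the weak-compactness extraction; Ehrling's lemma (whose contradiction proof you state correctly, using compactness of $X_1\hookrightarrow X_2$ and continuity plus injectivity of $X_2\hookrightarrow X_3$) reduces strong $L^p(0,T;X_2)$ convergence to strong $L^p(0,T;X_3)$ convergence; and the latter follows from the absolutely continuous $X_3$-representative, the H\"older equicontinuity estimate $\|\phi_n(t)-\phi_n(s)\|_{X_3}\leq|t-s|^{1-1/q}\|\partial_t\phi_n\|_{L^q(0,T;X_3)}$ (here $q>1$ is genuinely needed), pointwise total boundedness via short-time Bochner averages that are bounded in $X_1$, and Arzel\`a--Ascoli. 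Only small points need care in a full write-up: the average over $[t_0,t_0+h]$ must be replaced by a backward average near $t=T$; the $X_1$-bound on the average degenerates like $h^{-1/p}$, which is harmless because total boundedness is argued at fixed $h$ and then $h\to 0$; and Arzel\`a--Ascoli only yields convergence along a further subsequence, which suffices since compactness of the embedding is exactly the subsequence statement. What your approach buys is a self-contained proof of the lemma; what the paper's approach buys is brevity, which is reasonable given that the result is classical, but your version would be the right thing to include if the paper were meant to be self-contained.
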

\begin{remark}
 If $p=\infty$ and $q>1$, then the space $\left\{\phi \in L^{p}(0, T ; X_{1}) \mid \dfrac{\partial \phi}{\partial t} \in L^{q}(0, T ; X_{3})\right\}$ is compactly embedded into $\mathcal{C}([0, T] ; X_{2})$.
\end{remark}
\begin{thm}[de Rham, cf. {\cite[Theorem IV.2.4]{boyer2012mathematical}}] \label{de rham}
Let $\Omega$ be a connected, bounded, Lipschitz domain of $\mathbb{R}^d$. Let $\boldsymbol{h}$ be an element in $\left(H^{-1}(\Omega)\right)^d$, such that for any function $\varphi \in(\mathcal{D}(\Omega))^d$ satisfying $\operatorname{div} \varphi=0$, we have $\innerproduct{\boldsymbol{h},\varphi}_{(H_0^1)^d}=0$. Then, there exists a unique function $p$ belonging to $L_0^2(\Omega) = \left(\phi \in L^2 : \int_{\Omega} \phi = 0\right)$ such that $\boldsymbol{h}=\nabla p$.
\end{thm}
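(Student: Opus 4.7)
The plan is to combine a short uniqueness argument with a closed-range argument built on the Nečas inequality and on the density of divergence-free test fields in the space of divergence-free $H_0^1$ fields. For uniqueness, if $p_1, p_2 \in L_0^2(\Omega)$ both satisfy $\nabla p_i = \boldsymbol{h}$, then $\nabla(p_1 - p_2) = 0$ in the distributional sense; since $\Omega$ is connected, $p_1 - p_2$ is constant, and the zero-mean normalization forces $p_1 = p_2$.

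For existence I would study the bounded linear operator $G : L_0^2(\Omega) \to (H^{-1}(\Omega))^d$ defined by $G(p) = \nabla p$, acting in the distributional sense via $\innerproduct{G(p),\boldsymbol{\varphi}} = -(p, \operatorname{div}\boldsymbol{\varphi})$ for $\boldsymbol{\varphi} \in (H_0^1(\Omega))^d$. Its adjoint is essentially $-\operatorname{div}$ followed by projection onto mean-zero functions, so $\ker G^{\ast}$ equals $\{\boldsymbol{\varphi} \in (H_0^1(\Omega))^d : \operatorname{div}\boldsymbol{\varphi} = 0\}$, the $d$-dimensional analogue of the space $V_1$ defined earlier. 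The central analytic step is the Nečas inequality
\begin{equation*}
\|p\|_{L^2(\Omega)} \leq M\, \|\nabla p\|_{(H^{-1}(\Omega))^d} \quad \text{for every } p \in L_0^2(\Omega),
\end{equation*}
which on a bounded Lipschitz connected domain is proved by a contradiction argument using the Rellich compact embedding $L^2(\Omega) \hookrightarrow H^{-1}(\Omega)$ together with the boundary regularity of $\Omega$. I expect this Nečas inequality to be the main obstacle; its proof is unavoidably analytic and typically relies either on a Bogovski-type right inverse of the divergence or on a direct Lions-type argument.

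With the Nečas inequality in hand, $G$ is bounded below and therefore has closed range in $(H^{-1}(\Omega))^d$. The closed range theorem then identifies $\operatorname{Range}(G)$ with the annihilator of $\ker G^{\ast}$. The hypothesis only supplies $\innerproduct{\boldsymbol{h},\boldsymbol{\varphi}} = 0$ for every smooth compactly supported divergence-free $\boldsymbol{\varphi}$, so I would next invoke the density of divergence-free $(\mathcal{D}(\Omega))^d$ fields in $\ker G^{\ast}$, again a Bogovski-type construction exploiting the Lipschitz regularity of $\partial \Omega$. Continuity of $\boldsymbol{h}$ then extends the vanishing to all of $\ker G^{\ast}$, placing $\boldsymbol{h}$ in $\operatorname{Range}(G)$ and producing the unique $p \in L_0^2(\Omega)$ with $\nabla p = \boldsymbol{h}$.
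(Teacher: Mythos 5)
The paper never proves this statement: it is quoted verbatim as an auxiliary result with a citation to Boyer--Fabrie (Theorem IV.2.4), so there is no in-paper proof to compare against. Your sketch is the standard argument and is essentially the proof given in that cited reference: uniqueness from connectedness of $\Omega$ plus the zero-mean normalization, and existence by applying the closed range theorem to the gradient operator $G=\nabla\colon L_0^2(\Omega)\to (H^{-1}(\Omega))^d$, whose bounded-below property is exactly the Ne\v{c}as inequality and whose adjoint kernel is the space of divergence-free fields in $(H_0^1(\Omega))^d$. Two points are worth making explicit. First, all of the genuine analytic content sits in the two results you invoke as black boxes, namely the Ne\v{c}as--Lions inequality $\|p\|_{L^2(\Omega)}\le M\|\nabla p\|_{(H^{-1}(\Omega))^d}$ on $L_0^2(\Omega)$ (the compactness/contradiction argument you describe needs, in addition to Rellich, the nontrivial fact that $p\in H^{-1}$ with $\nabla p\in (H^{-1})^d$ implies $p\in L^2$, which is where the Lipschitz boundary enters) and the density of divergence-free $(\mathcal{D}(\Omega))^d$ fields in $\left\{\boldsymbol{v}\in (H_0^1(\Omega))^d:\operatorname{div}\boldsymbol{v}=0\right\}$, which also uses the boundary regularity; without proofs or precise citations of these, the write-up is a correct skeleton rather than a complete proof. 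Second, a small simplification in your adjoint computation: for $\boldsymbol{\varphi}\in (H_0^1(\Omega))^d$ one automatically has $\int_\Omega \operatorname{div}\boldsymbol{\varphi}=0$, so the ``projection onto mean-zero functions'' is vacuous and $G^{\ast}\boldsymbol{\varphi}=-\operatorname{div}\boldsymbol{\varphi}$ outright, after which the closed range theorem identifies $\operatorname{Range}(G)$ with the annihilator of $\ker G^{\ast}$ and the density step places $\boldsymbol{h}$ there, exactly as you say. Granting the two quoted lemmas, your argument is complete and correct, and it coincides with the route taken in the source the paper cites.
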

\noindent\textbf{Young's inequality.}
Let $p, q$ be positive real numbers satisfying $\frac{1}{p}+\frac{1}{q}=1$. If $a, b$ are non-negative real numbers, then
$$
a b \leq \frac{a^p}{p}+\frac{b^q}{q},
$$
and the equality holds if and only if $a^p=b^q$.
\begin{remark}\label{youngs}
We will use another form of Young's inequality in some of our proofing techniques which states that for any $\epsilon>0$ there exists a constant $A_{\epsilon}>0$ such that, 
$$
a b \leq \epsilon a^{p} + A_{\epsilon}b^{q}.
$$
\end{remark}
\section{Mathematical Model}\label{sec:math_model}
In this section, we introduce the unsteady Darcy-Brinkman model, which describes miscible fluid flow in a heterogeneous porous medium in the presence of the Korteweg stress, external body force, and the reaction term. To account for the heterogeneity of the porous media, we assume the permeability of the porous medium as a function of concentration.

The fluid under consideration is assumed to be incompressible, neutrally buoyant and miscible in nature. Let the bounded domain of the flow be  $\Omega \subset \mathbb{R}^2$ and (0, T) be the time interval. The following system of equations governs the flow:
\begin{equation}\label{1}\boldsymbol{\nabla} \cdot \boldsymbol{u}=0 \quad \text{in}\ (0,T) \times \Omega,  \end{equation}
\begin{equation}\label{2}
\dfrac{\partial \boldsymbol{u}}{\partial t}+\dfrac{\mu}{K(C)}\boldsymbol{u}=-\boldsymbol{\nabla} p+\mu_{e} \Delta \boldsymbol{u}+\boldsymbol{\nabla} \cdot \mathbf{T}\left( C\right) + \boldsymbol{f}\quad \text{in}\ (0,T) \times \Omega,
\end{equation}
\begin{equation}\label{3}
\dfrac{\partial C}{\partial t}+\boldsymbol{u}\cdot \boldsymbol{\nabla} C=d\Delta C -C g\quad \text{in}\ (0,T) \times \Omega. \end{equation}
Here, $\boldsymbol{u}$ represents the fluid velocity, $p$ denotes pressure, and $C$ denotes the solute concentration in the fluid. The diffusion coefficient $d$, viscosity $\mu$, and effective viscosity $\mu_{e}$ are assumed to be strictly positive constants. The equation \eqref{1} corresponds to the law of mass conservation and the conservation of momentum is described through the equation \eqref{2}, the unsteady Darcy-Stokes equation. These flow equations are coupled to a convection-diffusion equation for the solute concentration $C$ to describe the miscible displacement of the fluid. The coupling of the flow equations to the solute tracer has occurred through the Kortweg Stress `$\mathbf{T}(C)$' and by permeability  $``K(C) = (\alpha+\beta C)^{-1}$", where $\alpha>0$ and $\beta \geq 0$ are constants. The Korteweg stress tensor \cite{pramanik2013linear} `$\mathbf{T}(C)$' is given by $$\mathbf{T}(C)=\left(-\frac{1}{3} \hat{\delta}|\boldsymbol{\nabla} C|^{2}+\frac{2}{3} \gamma \boldsymbol{\nabla}^{2} C\right) I+\hat{\delta} \boldsymbol{\nabla}  C \otimes \boldsymbol{\nabla} C, $$
where $\otimes$ denotes the tensor product and $I$ is the identity matrix. The two parameters $\gamma > 0$ and $\hat{\delta} > 0$ are known as the Korteweg parameters. Following the continuum theory of surface tension, we can write \cite{JOSEPH1996104}
$$\boldsymbol{\nabla} \cdot \mathbf{T}(C)=\boldsymbol{\nabla} Q(C)-\hat{\delta} \boldsymbol{\nabla} \cdot[\boldsymbol{\nabla} C \otimes \boldsymbol{\nabla} C],
$$
where Q(C) is the term that appears as the coefficient of the identity matrix $I$ in the stress term $\mathbf{T}(C)$. We simplify $\nabla \cdot \mathbf{T}(C)$ by expanding last term in above expression
$$\boldsymbol{\nabla} \cdot \mathbf{T}(C) =\boldsymbol{\nabla} Q(C)-\hat{\delta}(\boldsymbol{\nabla} C \boldsymbol{\nabla} \cdot \boldsymbol{\nabla} C + (\boldsymbol{\nabla} C \cdot \boldsymbol{\nabla})\boldsymbol{\nabla} C) = \boldsymbol{\nabla} Q(C)-\dfrac{\hat{\delta}}{2} \boldsymbol{\nabla} [(\boldsymbol{\nabla} C)^2]-\hat{\delta}\boldsymbol{\nabla} C \Delta C,
$$ 
\begin{equation*}
\Longrightarrow ~ \boldsymbol{\nabla} \cdot \mathbf{T}(C)= \boldsymbol{\nabla} \left(Q(C)-\dfrac{\hat{\delta}}{2}(\boldsymbol{\nabla} C)^2\right)-\hat{\delta}\boldsymbol{\nabla} C \Delta C.
\end{equation*}
 Note that if $C \in L^2(0,T;V_2)$, then we have  
\begin{align}\label{4}
 \innerproduct*{\boldsymbol{\nabla} \cdot \mathbf{T}(C(t)),\boldsymbol{v}} &= \innerproduct*{\boldsymbol{\nabla} \left(Q(C(t))-\dfrac{\hat{\delta}}{2}(\boldsymbol{\nabla} C(t))^2\right)-\hat{\delta}\boldsymbol{\nabla} C(t) \Delta C(t),\boldsymbol{v}}\nonumber\\ & = \innerproduct*{\boldsymbol{\nabla} \left(Q(C(t))-\dfrac{\hat{\delta}}{2}(\boldsymbol{\nabla} C(t))^2\right),\boldsymbol{v}}  + \innerproduct{-\hat{\delta}\boldsymbol{\nabla} C(t) \Delta C(t),\boldsymbol{v}}\nonumber\\ & = -\hat {\delta} \big(\boldsymbol{\nabla} C(t) \Delta C(t) , \boldsymbol{v}\big)~ ~ a.  e. ~ on ~ (0,T),~ \forall \boldsymbol{v} \in V_1.
\end{align}
The boundary of a reservoir or karstified region can typically be considered impermeable. So the system \eqref{1}-\eqref{3} is supplemented with the following no-slip, no-penetration and no-flux boundary conditions: \begin{equation}\label{5}
\boldsymbol{u} = 0;\dfrac{\partial C}{\partial \boldsymbol{\eta} }=0 \hspace{5pt} \text{on} \hspace{5pt}  (0,T) \times \partial\Omega, \end{equation}
where $\boldsymbol{\eta}$ denotes unit outward normal vector to $\partial\Omega$. The initial conditions for the problem \eqref{1}-\eqref{3} are given as
\begin{equation}\label{6} \boldsymbol{u}\left(\boldsymbol{x},0\right) =\boldsymbol{u}_{0}\left( \boldsymbol{x}\right) ;C\left( \boldsymbol{x},0\right) =C_{0}\left(\boldsymbol{x}\right) \hspace{5pt} \text{ for } \boldsymbol{x} \in \Omega,
\end{equation}
where $\boldsymbol{u}_{0}$ and $C_{0}$ are given  functions of spatial variable only. We assume that the external body force term $\boldsymbol{f}(\boldsymbol{x},t)$ belongs to $L^{2}(0,T;L^2(\Omega))$, for $T>0$ and that $g(\boldsymbol{x})$ in the reaction term is non-negative and belongs to  $L^{\infty}(\Omega)$. Additionally, we assume a constraint on pressure $\int_\Omega p  ~dx = 0$ to ensure the uniqueness of the solution.

%%%%%%%%%%% Varitional form %%%%%%%%%%%%%%%%%%%%%%%%%%%%%%%%%%%%%%%%%
%%%%%%%%%%%%%%%%%%%%%%%%%%%%%%%%%%%%%%%%%%%%%%%%%%%%%%%%%%%%%%%%%%%%%%%
\subsection{Variational Form}
 By multiplying equations \eqref{2} and \eqref{3} with appropriate test functions and applying integration by parts while considering the boundary conditions, we can derive the variational form for the problem defined by equations \eqref{1}-\eqref{6}. The variational form corresponding to the problem described in equations \eqref{1}-\eqref{6} is as follows:
 \begin{equation}\label{9}
 \left (\dfrac{\partial C(t)}{\partial t},B\right ) +d (\boldsymbol{\nabla} C(t) , \boldsymbol{\nabla} B) + (\boldsymbol{u}(t)\cdot \boldsymbol{\nabla} C(t),B)+(g C(t),B) =0 , ~ \forall B \in V_{2}  ~ ~  a.  e. ~ on ~ (0,T),
\end{equation}
\begin{equation}\label{10}
\innerproduct*{\dfrac{\partial \boldsymbol{u}(t)}{\partial t},\boldsymbol{v}} +\mu_{e}(\boldsymbol{\nabla} \boldsymbol{u}(t) , \boldsymbol{\nabla} \boldsymbol{v}) +\mu \big((\alpha + \beta C(t))\boldsymbol{u}(t), \boldsymbol{v}\big) - \innerproduct*{\boldsymbol{\nabla} \cdot \mathbf{T}\left( C(t)\right), \boldsymbol{v}} - (\boldsymbol{f}(t),\boldsymbol{v}) = 0 , ~ \forall \boldsymbol{v} \in V_{1}  ~ ~  a.  e. ~ on ~ (0,T).
\end{equation}

\begin{definition}[weak solution for the problem \eqref{1}-\eqref{6}]\label{def31}
We call a pair $(\boldsymbol{u}, C)  \in \Big(L^{2}(0,T;V_{1}), L^{2}(0,T;V_{2})\Big)$, $ \forall\, T>0$ such that $ \boldsymbol{u}(0) =\boldsymbol{u}_{0} ;\,C(0) =C_{0}\hspace{5pt} \, a.e.\ \text{in}\ \Omega$, is a weak solution to the problem \eqref{1}-\eqref{6}, if this pair satisfies equation \eqref{9} and \eqref{10}.
\end{definition}

%% Existence and Uniqueness%%%%%%%%%%%%%%%%%%%%%%%%%%%%%%%%%%%%%%%%%%%%%%%%%%%%%%%%%%%%%%%%
%%%%%%%%%%%%%%%%%%%%%%%%%%%%%%%%%%%%%%%%%%%%%%%%%%%%%%%

\section{Existence and Uniqueness}\label{sec4}
In this section, we show the existence and uniqueness of the solution of the problem \eqref{1}-\eqref{6}.
\begin{thm}\label{th41}
For any initial condition $ (\boldsymbol{u}_{0},C_{0}) \in (S_{1},H^1) $, there exists a solution pair $(\boldsymbol{u},C)$ of the problem \eqref{1}-\eqref{6} in the sense of Definition \ref{def31}. Furthermore, we have
	$ \boldsymbol{u} \in L^{2}(0,T;V_{1}) \cap  \mathcal{C}([0,T];S_{1})$  and $ C \in  L^{2}(0,T;V_{2}) \cap \mathcal{C}([0,T];H^1). $
\end{thm}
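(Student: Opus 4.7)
The plan is to follow the Galerkin scheme announced in the introduction. First I would fix $\{\boldsymbol{v}_j\}_{j\geq 1}\subset V_1$ and $\{B_j\}_{j\geq 1}\subset V_2$ as the eigenfunctions of the Stokes operator and of the Neumann Laplacian, respectively; both form orthogonal bases of the relevant Hilbert spaces. For each $n$ I would seek approximate solutions $\boldsymbol{u}_n(t)=\sum_{j=1}^n a_{nj}(t)\boldsymbol{v}_j$ and $C_n(t)=\sum_{j=1}^n b_{nj}(t) B_j$ satisfying the finite-dimensional projection of \eqref{9}--\eqref{10} with $\boldsymbol{u}_n(0), C_n(0)$ chosen as the orthogonal projections of $\boldsymbol{u}_0, C_0$. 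Local-in-time existence of $(a_{nj},b_{nj})$ follows from Carathéodory theory since all nonlinearities ($C\boldsymbol{u}$, $\boldsymbol{u}\cdot\nabla C$, $\nabla C\,\Delta C$) are polynomial in the coefficients; global existence will follow once the energy bounds below are in hand.

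Next I would derive a hierarchy of a priori bounds uniform in $n$. Testing \eqref{9} with $C_n$ and using $g\geq 0$, $\nabla\!\cdot\boldsymbol{u}_n=0$ together with the boundary condition yields an $L^\infty(0,T;L^2)\cap L^2(0,T;H^1)$ bound on $C_n$. Testing \eqref{10} with $\boldsymbol{u}_n$, combined with \eqref{4}, gives
\begin{equation*}
\tfrac{1}{2}\tfrac{d}{dt}\|\boldsymbol{u}_n\|_{L^2}^2+\mu_e\|\nabla\boldsymbol{u}_n\|_{L^2}^2+\mu\alpha\|\boldsymbol{u}_n\|_{L^2}^2+\mu\beta(C_n\boldsymbol{u}_n,\boldsymbol{u}_n)=-\hat{\delta}(\nabla C_n\,\Delta C_n,\boldsymbol{u}_n)+(\boldsymbol{f},\boldsymbol{u}_n),
\end{equation*}
so to close the Gronwall loop I need an $H^2$ bound on $C_n$. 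I would therefore test \eqref{9} with $-\Delta C_n$, integrating the convective term by parts and controlling $(\boldsymbol{u}_n\cdot\nabla C_n,\Delta C_n)$ and $(\nabla C_n\Delta C_n,\boldsymbol{u}_n)$ via Hölder, Theorem \ref{gagliardo} ($\|\cdot\|_{L^4}\lesssim\|\cdot\|_{L^2}^{1/2}\|\nabla\cdot\|_{L^2}^{1/2}$ in dimension two) and Young's inequality (Remark \ref{youngs}) to absorb the highest-order terms into $\mu_e\|\nabla\boldsymbol{u}_n\|_{L^2}^2$ and $d\|\Delta C_n\|_{L^2}^2$. Combining with Theorem \ref{GR} then produces uniform bounds for $\boldsymbol{u}_n$ in $L^\infty(0,T;S_1)\cap L^2(0,T;V_1)$ and for $C_n$ in $L^\infty(0,T;H^1)\cap L^2(0,T;V_2)$.

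With these estimates, the duality-paired bounds $\partial_t\boldsymbol{u}_n\in L^2(0,T;V_1^*)$ and $\partial_t C_n\in L^2(0,T;(H^1)^*)$ follow by testing the equations with arbitrary elements of norm one and bounding each term by the quantities just controlled (the only nontrivial bound is again for $\nabla C_n\,\Delta C_n$, which lies in $L^2(0,T;L^{4/3})\hookrightarrow L^2(0,T;V_1^*)$ by Gagliardo--Nirenberg). Banach--Alaoglu then gives weakly(-$\star$) convergent subsequences, and Theorem \ref{aubin-lions} together with its remark yields strong convergence $\boldsymbol{u}_n\to\boldsymbol{u}$ in $L^2(0,T;L^2)$ and $C_n\to C$ in $L^2(0,T;H^1)$, as well as continuity $\boldsymbol{u}\in\mathcal{C}([0,T];S_1)$, $C\in\mathcal{C}([0,T];H^1)$; the initial conditions are inherited because of the latter continuity.

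The last and main step is passage to the limit in the nonlinear terms. For $(\boldsymbol{u}_n\cdot\nabla C_n,B)$ and $\mu\beta(C_n\boldsymbol{u}_n,\boldsymbol{v})$ the strong convergences above are enough. The delicate term is the Korteweg contribution $-\hat{\delta}(\nabla C_n\,\Delta C_n,\boldsymbol{v})$; this is the step I expect to be the main obstacle, since it requires strong convergence of $\Delta C_n$ or, equivalently, of $\nabla C_n$ in $L^2(0,T;L^2)$ together with weak convergence of $\Delta C_n$ in $L^2(0,T;L^2)$. The former is supplied by Aubin--Lions applied with $X_1=V_2$, $X_2=H^1$, $X_3=(H^1)^*$; the latter is the weak limit already extracted. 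Once this product is handled, taking $\boldsymbol{v},B$ to be fixed basis elements in \eqref{9}--\eqref{10}, passing to the limit, and then invoking density in $V_1$ and $V_2$ shows that $(\boldsymbol{u},C)$ satisfies the variational formulation in the sense of Definition \ref{def31}. Finally, Theorem \ref{de rham} recovers the pressure $p\in L^2_0(\Omega)$ from the divergence-free test space, completing the existence argument.
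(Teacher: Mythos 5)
Your overall architecture (Galerkin with a basis of Neumann--Laplacian eigenfunctions for $V_2$, energy estimates, bounds on the time derivatives, Aubin--Lions, limit passage, de Rham for the pressure) is the same as the paper's, and your treatment of the limit in the Korteweg term (strong convergence of $\nabla C_n$ in $L^2(0,T;L^2)$ paired with weak convergence of the quadratic gradient terms) as well as the recovery of $p$ are fine. The genuine gap is in the central a priori estimate. You propose to bound the two trilinear terms $(\boldsymbol{u}_n\cdot\nabla C_n,\Delta C_n)$ and $(\nabla C_n\,\Delta C_n,\boldsymbol{u}_n)$ separately by H\"older, Gagliardo--Nirenberg and Young and to absorb them into $\mu_e\|\nabla\boldsymbol{u}_n\|_{L^2}^2$ and $d\|\Delta C_n\|_{L^2}^2$. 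This cannot work for arbitrary data and arbitrary $T$: after Gagliardo--Nirenberg each of these terms is bounded by $M\|\boldsymbol{u}_n\|_{L^2}^{1/2}\|\nabla\boldsymbol{u}_n\|_{L^2}^{1/2}\|\nabla C_n\|_{L^2}^{1/2}\|\Delta C_n\|_{L^2}^{3/2}$, whose high-order part $\|\nabla\boldsymbol{u}_n\|_{L^2}^{1/2}\|\Delta C_n\|_{L^2}^{3/2}$ is homogeneous of degree two in the dissipative quantities; hence no choice of $\epsilon$ allows absorption uniformly in the low-order factor (take $\|\nabla\boldsymbol{u}_n\|=\|\Delta C_n\|\to\infty$ with $\|\boldsymbol{u}_n\|,\|\nabla C_n\|$ fixed to see the failure). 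Young's inequality then leaves a remainder of the type $M(\epsilon)\|\nabla\boldsymbol{u}_n\|_{L^2}^{2}\|\boldsymbol{u}_n\|_{L^2}^{2}\|\nabla C_n\|_{L^2}^{2}$, i.e.\ a \emph{quadratic} differential inequality for the energy $\|\boldsymbol{u}_n\|_{L^2}^2+\hat{\delta}\|\nabla C_n\|_{L^2}^2$, since $\int_0^t\|\nabla\boldsymbol{u}_n\|_{L^2}^2$ is not known a priori independently of the estimate being derived (the velocity energy identity itself contains the Korteweg term). That yields only local-in-time bounds or a small-data result, not the statement of Theorem \ref{th41}.

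The paper closes the loop with a structural cancellation rather than an estimate: test the concentration equation with $-\hat{\delta}\Delta C_n$ (legitimate precisely because the $V_2$-basis diagonalizes the Neumann Laplacian, and note the weight $\hat{\delta}$), test the momentum equation with $\boldsymbol{u}_n$, and add. By \eqref{4}, the Korteweg contribution in the velocity identity is $-\hat{\delta}\big(\Delta C_n\,\nabla C_n,\boldsymbol{u}_n\big)$, which is pointwise identical (with opposite sign) to the convective contribution $\hat{\delta}\big(\boldsymbol{u}_n\cdot\nabla C_n,\Delta C_n\big)$ in the concentration identity, so the two dangerous terms cancel exactly and never need to be estimated. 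What remains ($\beta\mu(C_n\boldsymbol{u}_n,\boldsymbol{u}_n)$, controlled via Gagliardo--Nirenberg using the $L^\infty(0,T;L^2)$ bound on $C_n$ already obtained from testing with $C_n$, plus the reaction and forcing terms) is linear in the energy, so Gr\"onwall gives the uniform $L^\infty(0,T;S_1)\cap L^2(0,T;V_1)$ and $L^\infty(0,T;H^1)\cap L^2(0,T;V_2)$ bounds for all $T$. You should replace your ``absorb by Young'' step with this cancellation (i.e., choose the weight $\hat{\delta}$ in the $-\Delta C_n$ test and add the two identities before estimating); the rest of your argument then goes through essentially as in the paper.
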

To establish the existence of a solution, we employ the Galerkin method. We use a special basis of $V_{2}$ composed with eigenvectors of the negative of the Laplace operator associated with the Neumann boundary condition corresponding to eigen values  $\{\lambda_j\}_{j=1}^{\infty}$ and an arbitrary basis of $V_{1}$. Let the basis for  $V_{1}$ be given by ($\boldsymbol{w}_1,\boldsymbol{w}_2,.....$) and let the basis for $V_{2}$ be given by ($z_1,z_2,.....$). Let $(V_1)_n = span(\boldsymbol{w}_1,\boldsymbol{w}_2,.....,\boldsymbol{w}_n) $
and $(V_2)_n = span(z_1,z_2,.....z_n) $ are finite dimensional subspaces of $V_1$ and $V_2$, respectively.
Now we look for the functions $\boldsymbol{u}_{n}:[0,T] \rightarrow (V_1)_n$ and $C_{n}:[0,T] \rightarrow (V_2)_n$ of the form
$$\boldsymbol{u}_n = \sum_{j=1}^{n} \alpha_{j}^{n}(t) \boldsymbol{w}_{j} ~ \text{ and } ~ C_{n} = \sum_{j=1}^{n} \beta_{j}^{n}(t) z_{j},$$
so that these functions satisfy the following equations
\begin{align}
\label{15}
 \left (\dfrac{\partial C_{n}(t)}{\partial t},z_{j}\right ) +d (\boldsymbol{\nabla} C_{n}(t) , \boldsymbol{\nabla} z_{j}) + (\boldsymbol{u}_n(t)\cdot \boldsymbol{\nabla} C_{n}(t),z_{j}) + (gC_n(t),z_{j})=0 ~, ~z_{j} \in (V_{2})_n ,    
\end{align}
\begin{align}\label{16}
\innerproduct*{\dfrac{\partial \boldsymbol{u}_{n}(t)}{\partial t},\boldsymbol{w}_j} +\mu_{e}(\boldsymbol{\nabla} \boldsymbol{u}_{n}(t) , \boldsymbol{\nabla} \boldsymbol{w}_j) + \mu\left( \left( \alpha + \beta C_{n}(t) \right)\boldsymbol{u}_{n}(t), \boldsymbol{w}_j\right) -\innerproduct{\boldsymbol{\nabla} \cdot \mathbf{T}\left( C_{n}(t)\right), \boldsymbol{w}_j} - (\boldsymbol{f}(t),\boldsymbol{w}_j)= 0 ~,~ \boldsymbol{w}_j \in (V_{1})_n ,   
\end{align}
$ a.  e. ~ on ~ (0,T)$, for $j \in \{1,2,3....n \}$, and satisfy the initial conditions
$$ \boldsymbol{u}_{n}(0) = \sum_{j=1}^{n} \left( \boldsymbol{u}_{0},\boldsymbol{w}_{j}\right) \boldsymbol{w}_{j} ~ \text{ and } ~ C_{n}(0) = \sum_{j=1}^{n}\left(C_{0},z_{j}\right)z_{j}.  $$
Since $(V_1)_n$ and $(V_2)_n$ are finite dimensional, equations \eqref{15} and \eqref{16} correspond to a nonlinear system of first-order ordinary differential equations with constant coefficients. Thus, the local existence of $\boldsymbol{u}_n$ and $C_n$ is proved by Carathéodory's existence theorem. 

Now we prove some priori estimates for the velocity field sequence $\boldsymbol{u}_n$, concentration sequence $C_n$, and their derivatives. Then, we use these priori estimates to employ the continuation argument to extend our solutions for each $t \in (0, T], \text{ for given } T>0$.

%%%%%% Lemma 4.1 %%%%%%%%%%%%%%%%%%%%%%%%%%%%%%%%%%%%%%%%%%%%%%%%%%%%%%%%%%%%%%%%%%%%%%%%%%%%
%%%%%%%%%%%%%%%%%%%%%%%%%%%%%%%%%%%%%%%%%%

\begin{lem}\label{le41}
 The concentration sequence $C_n$ is uniformly bounded in both $L^{\infty}(0,T;L^{2}) $ and $L^{2}(0,T;H^{1})$ spaces. 
 \end{lem}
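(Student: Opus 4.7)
The plan is to derive the standard parabolic energy estimate by testing the Galerkin identity \eqref{15} with $C_n(t)$ itself. Since $C_n(t)=\sum_{j=1}^n \beta_j^n(t) z_j$ lies in $(V_2)_n$, I would multiply \eqref{15} by $\beta_j^n(t)$ and sum over $j=1,\ldots,n$ to obtain
\[
\left(\dfrac{\partial C_n(t)}{\partial t}, C_n(t)\right) + d\,\|\boldsymbol{\nabla} C_n(t)\|_{L^2}^2 + (\boldsymbol{u}_n(t)\cdot \boldsymbol{\nabla} C_n(t), C_n(t)) + (gC_n(t), C_n(t)) = 0.
\]

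Next I would simplify each term. The time-derivative term becomes $\tfrac{1}{2}\tfrac{d}{dt}\|C_n(t)\|_{L^2}^2$. The convective term vanishes: since $\boldsymbol{u}_n\in(V_1)_n\subset V_1$ is divergence-free and satisfies $\boldsymbol{u}_n=0$ on $\partial\Omega$, integration by parts gives $(\boldsymbol{u}_n\cdot\boldsymbol{\nabla} C_n, C_n)=\tfrac{1}{2}(\boldsymbol{u}_n, \boldsymbol{\nabla}(C_n^2))=-\tfrac{1}{2}(\boldsymbol{\nabla}\cdot\boldsymbol{u}_n, C_n^2)=0$. The reaction contribution $(gC_n, C_n)=\int_\Omega g\,C_n^2\,dx$ is non-negative because $g\geq 0$ a.e.\ by assumption. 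Dropping this sink and integrating from $0$ to any $t\in[0,T]$ yields
\[
\|C_n(t)\|_{L^2}^2 + 2d\int_0^t \|\boldsymbol{\nabla} C_n(s)\|_{L^2}^2\, ds \;\leq\; \|C_n(0)\|_{L^2}^2.
\]

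Because the basis $\{z_j\}$ is chosen as the $L^2$-orthonormal eigenfunctions of $-\Delta$ with Neumann boundary data, $C_n(0)$ is the $L^2$-projection of $C_0$ onto $(V_2)_n$, so Bessel's inequality gives $\|C_n(0)\|_{L^2}\leq\|C_0\|_{L^2}$ uniformly in $n$. Taking the supremum over $t\in[0,T]$ then produces the uniform $L^\infty(0,T;L^2)$ bound, while combining the gradient bound with the trivial estimate $\int_0^T \|C_n(t)\|_{L^2}^2\,dt\leq T\|C_0\|_{L^2}^2$ yields the uniform $L^2(0,T;H^1)$ bound.

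I do not expect any real obstacle at this stage, since the argument is a textbook parabolic energy estimate and does not even require Grönwall, thanks to the favourable sign of the reaction sink. The only point requiring care is the cancellation of the convective nonlinearity, which relies crucially on $\boldsymbol{u}_n$ belonging to the solenoidal Galerkin space $(V_1)_n\subset V_1$ (both divergence-free and vanishing on $\partial\Omega$). The genuinely harder estimates will come later, when uniform bounds on $\boldsymbol{u}_n$ and on $\boldsymbol{\nabla} C_n$ in higher norms are needed to control the Korteweg coupling term $\hat{\delta}\,\boldsymbol{\nabla} C\,\Delta C$ between \eqref{15} and \eqref{16}.
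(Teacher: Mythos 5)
Your proposal is correct and follows essentially the same route as the paper: test \eqref{15} with $C_n(t)$, cancel the convective term via incompressibility and the boundary conditions, drop the non-negative reaction term using $g\geq 0$, integrate in time, and bound $\|C_n(0)\|_{L^2}$ by $\|C_0\|_{L^2}$ through the projection onto $(V_2)_n$. Your explicit appeal to Bessel's inequality for the initial data and the remark that the $L^2(0,T;H^1)$ bound also uses the $L^\infty(0,T;L^2)$ control are minor clarifications of steps the paper leaves implicit.
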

\begin{proof}  
By multiplying equation \eqref{15}, with $\beta_{j}^{n}$ and summing the resulting equations over $j=1$ to $n$, we get 
\begin{align*}
\left (\dfrac{\partial C_n(t)}{\partial t},C_n(t)\right ) +d (\boldsymbol{\nabla} C_n(t) , \boldsymbol{\nabla} C_n(t)) + (\boldsymbol{u}_n\cdot \boldsymbol{\nabla} C_n(t),C_n(t))+(g C_n(t),C_n(t)) = 0 ~ ~ \text{ for } a.  e. ~ on ~ (0,T).   
\end{align*}
\noindent  As we have,  $g\geq 0$, so $  (g C_n, C_n) \geq 0$. Therefore
\begin{align*}
\dfrac{1}{2}\dfrac{d}{dt}\left(C_n(t),C_n(t)\right)+d\left(\boldsymbol{\nabla} C_n(t),\boldsymbol{\nabla} C_n(t)\right) +\int_{\Omega}\boldsymbol{u}_n(t)\cdot \dfrac{1}{2}\boldsymbol{\nabla} \left( C_n^{2}(t)\right) \leq 0 ~ ~a.  e. ~ on ~ (0,T).
\end{align*}
Using integration by parts on the last term of the left-hand side of the above equation, we have \begin{align*}
 \dfrac{1}{2}\dfrac{d}{dt}\left\| C_n(t)\right\| _{L^{2}\left(\Omega \right) }^{2}+d{\left\| \boldsymbol{\nabla} C_n(t)\right\|}^{2} _{L^{2}\left( \Omega\right)}+\dfrac{1}{2}\left[\int _{\partial \Omega}C_n^{2}(t)\boldsymbol{u}_n(t)\cdot \boldsymbol{\eta} -\int_{\Omega} C_n^{2}(t)\boldsymbol{\nabla} \cdot  \boldsymbol{u}_n(t)\right]  \leq 0  ~ ~  a.  e. ~ on ~ (0,T).  \end{align*}
From boundary conditions \eqref{5} and the continuity equation \eqref{4}, we observe that the last term of the above equation vanishes. So we arrive at,
\begin{align*}
 \dfrac{1}{2} \dfrac{d}{dt}\left\| C_n(t)\right\| _{L^{2}\left(\Omega \right) }^{2}+d{\left\| \boldsymbol{\nabla} C_n(t)\right\|}^{2} _{L^{2}\left( \Omega\right) } \leq 0  ~ ~  a.  e. ~ on ~ (0,T).   
\end{align*}
Integrating the above equality in time from $0$ to $\tau \in (0,T]$, we obtain
\begin{align*}
\left\| C_n\left(\tau\right)\right\| _{L^{2}\left(\Omega \right) }^{2} + 2 d \int_{0}^{\tau}{\left\| \boldsymbol{\nabla} C_n(t)\right\|}^{2} _{L^{2}\left( \Omega\right) } \leq \left\| C_n\left(0\right)\right\| _{L^{2}\left(\Omega \right) }^{2}\leq \left\| C(0)\right\| _{L^{2}(\Omega)}^2;\hspace{5pt} \forall \tau > 0. \hspace{5pt}    
\end{align*}
So, $\esssup _{\tau \in \left[ 0,T\right] }\left\| C_n\left( \tau \right) \right\| _{L^{2}}\leq \left\| C(0)\right\| _{L^{2}} .$
Hence the concentration sequence $C_n$ is uniformly bounded in both\\ $L^{\infty}(0,T;L^{2})$ and $L^{2}(0,T;H^1)$ spaces.
\end{proof}

%%%%%%%%%%%%%%%%%%%%%%%%%%%% Lemma 2 %%%%%%%%%%%%%%%%%%%%%%%%%%%%%%%%%%%%%%%%%%%%%%%%%%%%%
%%%%%%%%%%%%%%%%%%%%%%%%%%%%%%%%%%%%%%%%%%%%%%%%%%%%%%%
\begin{lem}\label{le42}
The velocity field sequence $\boldsymbol{u}_n$ is uniformly bounded in both $L^{\infty}(0,T;S_{1})$ and  $L^{2}(0,T;V_{1})$ spaces. The concentration sequence $C_n$ is uniformly bounded in both $ L^{\infty}(0,T;H^1)$ and   $ L^{2}(0,T;V_{2})$ spaces. \end{lem}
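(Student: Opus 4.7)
The plan is to test the Galerkin equations with carefully chosen combinations that produce an $H^1$ energy identity for $C_n$ and an $L^2$ energy identity for $\boldsymbol{u}_n$, then couple the two inequalities and close with Grönwall. Throughout I will use the $L^\infty(0,T;L^2)\cap L^2(0,T;H^1)$ bound on $C_n$ already established in Lemma \ref{le41}.

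First I would multiply \eqref{16} by $\alpha_j^n$ and sum, equivalently inserting $\boldsymbol{v}=\boldsymbol{u}_n(t)$. Using \eqref{4} this produces
\begin{equation*}
\tfrac{1}{2}\tfrac{d}{dt}\|\boldsymbol{u}_n\|_{L^2}^2 + \mu_e\|\boldsymbol{\nabla}\boldsymbol{u}_n\|_{L^2}^2 + \mu\bigl((\alpha+\beta C_n)\boldsymbol{u}_n,\boldsymbol{u}_n\bigr) = -\hat\delta(\boldsymbol{\nabla} C_n\,\Delta C_n,\boldsymbol{u}_n) + (\boldsymbol{f},\boldsymbol{u}_n).
\end{equation*}
The permeability term $\mu\alpha\|\boldsymbol{u}_n\|_{L^2}^2$ is non-negative; the piece $\mu\beta(C_n\boldsymbol{u}_n,\boldsymbol{u}_n)$ is estimated by Hölder, the 2D Gagliardo–Nirenberg inequality (Theorem \ref{gagliardo}) applied to $\|\boldsymbol{u}_n\|_{L^4}$, and the Lemma \ref{le41} bound on $\|C_n\|_{L^2}$.

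Next I would exploit that $\{z_j\}$ is the Neumann eigenbasis of $-\Delta$, so the combination $\sum_j \lambda_j \beta_j^n z_j$ equals $-\Delta C_n(t)$. Testing \eqref{15} against this, and integrating by parts with the homogeneous Neumann boundary condition, yields
\begin{equation*}
\tfrac{1}{2}\tfrac{d}{dt}\|\boldsymbol{\nabla} C_n\|_{L^2}^2 + d\|\Delta C_n\|_{L^2}^2 = (\boldsymbol{u}_n\cdot\boldsymbol{\nabla} C_n,\Delta C_n) + (gC_n,\Delta C_n).
\end{equation*}
The convective term is bounded by $\|\boldsymbol{u}_n\|_{L^4}\|\boldsymbol{\nabla} C_n\|_{L^4}\|\Delta C_n\|_{L^2}$ and the reactive term by $\|g\|_{L^\infty}\|C_n\|_{L^2}\|\Delta C_n\|_{L^2}$.

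I would then add the two identities and split every right-hand-side product via Young's inequality (Remark \ref{youngs}) with a small $\epsilon$, choosing the Gagliardo–Nirenberg exponents so that the resulting highest-order pieces $\epsilon\|\Delta C_n\|_{L^2}^2$ and $\epsilon\|\boldsymbol{\nabla}\boldsymbol{u}_n\|_{L^2}^2$ are absorbed into $d\|\Delta C_n\|_{L^2}^2$ and $\mu_e\|\boldsymbol{\nabla}\boldsymbol{u}_n\|_{L^2}^2$ on the left. What remains is an inequality of the form
\begin{equation*}
\tfrac{d}{dt}\bigl(\|\boldsymbol{u}_n\|_{L^2}^2+\|\boldsymbol{\nabla} C_n\|_{L^2}^2\bigr) + c_1\bigl(\|\boldsymbol{\nabla}\boldsymbol{u}_n\|_{L^2}^2+\|\Delta C_n\|_{L^2}^2\bigr) \le c_2 h(t)\bigl(\|\boldsymbol{u}_n\|_{L^2}^2+\|\boldsymbol{\nabla} C_n\|_{L^2}^2\bigr)+c_3\|\boldsymbol{f}\|_{L^2}^2+c_4\|C_n\|_{L^2}^2,
\end{equation*}
with $h$ integrable in time thanks to the Lemma \ref{le41} bound on $\|\boldsymbol{\nabla} C_n\|_{L^2(0,T;L^2)}$ and the bound on $\|\boldsymbol{u}_n\|_{L^2}$ that comes out of the velocity identity. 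After integration over $(0,\tau)$ and application of Grönwall's inequality (Theorem \ref{GR}) the four claimed bounds follow, with initial data controlled by $\|\boldsymbol{u}_0\|_{L^2}$ and $\|C_0\|_{H^1}$ through the Galerkin projections.

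The main obstacle is the Korteweg trilinear term $(\boldsymbol{\nabla} C_n\,\Delta C_n,\boldsymbol{u}_n)$: it simultaneously contains the second derivative of $C_n$ and the $H^1$ norm of $\boldsymbol{u}_n$, so the Gagliardo–Nirenberg exponents and the Young-inequality splitting must be tuned so that exactly the quantities $\|\Delta C_n\|_{L^2}^2$ and $\|\boldsymbol{\nabla}\boldsymbol{u}_n\|_{L^2}^2$ can absorb the dangerous factors, with the remainder left as $\|\boldsymbol{\nabla} C_n\|_{L^2}^2\cdot\|\boldsymbol{u}_n\|_{L^2}^2$ multiplied by an $L^1_t$ coefficient. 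A secondary issue is the indefinite sign of $C_n$ in the permeability factor $\alpha+\beta C_n$; this term is treated as a cross term rather than as a coercive contribution, and is also handled by Gagliardo–Nirenberg together with the $L^\infty(0,T;L^2)$ bound on $C_n$.
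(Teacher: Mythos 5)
Your overall framework (test the velocity equation with $\boldsymbol{u}_n$, test the concentration equation with $-\Delta C_n$ through the Neumann eigenbasis, add, and close with Grönwall plus Lemma \ref{le41}) is the same skeleton as the paper's, and your treatment of the permeability term $\beta\mu(C_n\boldsymbol{u}_n,\boldsymbol{u}_n)$, the reaction term and the forcing term is fine. The gap is exactly at the point you flag as the ``main obstacle'': the plan to estimate the Korteweg trilinear term (and, symmetrically, the convection term $(\boldsymbol{u}_n\cdot\boldsymbol{\nabla} C_n,\Delta C_n)$) by tuning Gagliardo--Nirenberg and Young cannot be carried out. After H\"older and the 2D Gagliardo--Nirenberg inequality the term is bounded by
\begin{equation*}
M\,\|\Delta C_n\|_{L^2}^{3/2}\,\|\boldsymbol{\nabla}\boldsymbol{u}_n\|_{L^2}^{1/2}\,\|\boldsymbol{\nabla} C_n\|_{L^2}^{1/2}\,\|\boldsymbol{u}_n\|_{L^2}^{1/2}
\end{equation*}
(up to lower-order corrections), and the exponents of the two highest-order factors already sum to $2$. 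A Young splitting of the form $\epsilon\|\Delta C_n\|^2+\epsilon\|\boldsymbol{\nabla}\boldsymbol{u}_n\|^2+C_\epsilon\,h(t)\bigl(\|\boldsymbol{\nabla} C_n\|^2+\|\boldsymbol{u}_n\|^2\bigr)$ would require conjugate exponents satisfying $\tfrac34+\tfrac14+\tfrac1r=1$, i.e.\ $\tfrac1r=0$, which is impossible; any admissible splitting leaves $\|\Delta C_n\|^2$ or $\|\boldsymbol{\nabla}\boldsymbol{u}_n\|^2$ multiplied by an uncontrolled (merely $L^1_t$, not pointwise small) coefficient, and such a term can neither be absorbed into the left-hand side nor fed into Grönwall. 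The term is critical in 2D, so no choice of exponents closes it without smallness of the data.

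The paper's proof never estimates these two terms at all: it multiplies the $-\Delta C_n$ test by the Korteweg parameter, i.e.\ uses the multiplier $\hat\delta\lambda_j\beta_j^n$ rather than $\lambda_j\beta_j^n$, so that the combined energy is $\tfrac12\|\boldsymbol{u}_n\|_{L^2}^2+\tfrac{\hat\delta}{2}\|\boldsymbol{\nabla} C_n\|_{L^2}^2$. With this weight, the convection contribution $\hat\delta(\boldsymbol{u}_n\cdot\boldsymbol{\nabla} C_n,\Delta C_n)$ from the concentration estimate and the Korteweg contribution $-\hat\delta(\Delta C_n\,\boldsymbol{\nabla} C_n,\boldsymbol{u}_n)$ from the velocity estimate are the \emph{same} integral with opposite signs and cancel exactly in the sum (see equation \eqref{12a}); only the subcritical $g$-, $\beta$- and $\boldsymbol{f}$-terms remain, which are handled precisely as you propose, and then Grönwall with Lemma \ref{le41} gives the $L^\infty$ bounds, with the $L^2(0,T;V_1)$ and $L^2(0,T;V_2)$ bounds recovered from the integrated inequality. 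Your own two displayed identities already contain this cancellation; you only need to add them with the weight $\hat\delta$ on the concentration identity (your version adds with weight $1$, leaving a residual $(1-\hat\delta)(\boldsymbol{u}_n\cdot\boldsymbol{\nabla} C_n,\Delta C_n)$ that is exactly the critical term you cannot estimate). As a consequence the Grönwall coefficient is in fact a constant, so the basic form of Theorem \ref{GR} suffices and no $L^1_t$ coefficient $h(t)$ is needed.
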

\begin{proof}
 By multiplying equation \eqref{15} with $\hat \delta \lambda_{j}\beta_{j}^{n}$, and summing the resulting equations over  $j = 1$ to $n$, and utilizing the fact that $-\Delta C_n = \lambda_j C_n$, we conclude
 \begin{align*}
\left(\dfrac{\partial C_n(t)}{\partial t}, -\hat \delta\Delta C_n(t)\right) + \left(\boldsymbol{u}_n(t)\cdot \boldsymbol{\nabla} C_n(t),  -\hat \delta\Delta C_n(t)\right) - d\left(\Delta C_n(t), -\hat \delta\Delta C_n(t)\right)+ \left(C_n(t)g, -\hat \delta\Delta C_n(t)\right)  = 0,
\end{align*}
$ a.  e. ~ on ~ (0,T)$.
Using integration by parts and  boundary conditions \eqref{5}, we arrive at
\begin{align*}
\dfrac{\hat \delta}{2}\dfrac{d}{dt} \left(\boldsymbol{\nabla} C_n(t), \boldsymbol{\nabla} C_n(t)\right) + d\hat \delta \left(\Delta C_n(t), \Delta C_n(t)\right) = \hat \delta \left(\boldsymbol{u}_n(t)\cdot \boldsymbol{\nabla} C_n(t),\Delta C_n(t)\right) + \hat \delta \left(C_n(t)g, \Delta C_n(t)\right) ~ ~ a.  e. ~ on ~ (0,T).   
\end{align*}
Using Holder's inequality on the last term on the right-hand side of the above equation, we have
\begin{align*}
\dfrac{\hat \delta}{2}\dfrac{d}{dt}\left\| \boldsymbol{\nabla} C_n(t)\right\| _{L^{2}}^{2}+d\hat \delta \left\| \Delta C_n(t)\right\| _{L^{2}}^{2}\leq \hat \delta \left(\boldsymbol{u}_n(t)\cdot \boldsymbol{\nabla} C_n(t),\Delta C_n(t)\right) +  \hat  \delta \|g C_n(t)\|_{L^2} \| \Delta C_n(t)\|_{L^2}  ~ ~ a.  e. ~ on ~ (0,T). 
\end{align*}
Using Young's inequality (Remark \ref{youngs}), and the assumption that $g \in L^{\infty}(\Omega)$, we conclude 
\begin{align}\label{11}
 \dfrac{\hat \delta}{2}\dfrac{d}{dt}\left\| \boldsymbol{\nabla} C_n(t)\right\|_{L^{2}}^{2}+d\hat \delta \left\| \Delta C_n(t)\right\|_{L^{2}}^{2}&\leq \hat \delta \left(\boldsymbol{u}_n(t)\cdot \boldsymbol{\nabla} C_n(t),\Delta C_n(t)\right) +\hat \delta \left(\epsilon\left\| \Delta C_n(t)\right\| _{L^{2}}^{2} + M(\epsilon)\left\| g\right\|_{L^{\infty}(\Omega)}^2\left\|  C_n(t)\right\|_{L^{2}}^{2}\right)   \end{align}
$a.  e. ~ on ~ (0, T)$. By multiplying equation \eqref{16} with $\alpha_{j}^{n}$, and summing over $j=1$ to $n$, we get
\begin{align*}
 \dfrac{1}{2}\dfrac{d}{dt}\left\| \boldsymbol{u}_n(t)\right\| _{L^{2}}^{2}+\mu_{e}\left\| \boldsymbol{\nabla} \boldsymbol{u}_n(t)\right\| _{L^{2}}^{2}+\alpha \mu \left\| \boldsymbol{u}_n(t)\right\| _{L^{2}}^{2} = \innerproduct{\boldsymbol{\nabla} \cdot \mathbf{T}\left( C_n(t)\right), \boldsymbol{u}_n(t)} - \beta \mu \big(C_n(t) \boldsymbol{u}_n(t),\boldsymbol{u}_n(t)\big) + \big(\boldsymbol{f}(t),\boldsymbol{u}_n(t)\big), 
\end{align*}
$ a.  e. ~ on ~ (0,T)$. Using expression of $\innerproduct{\boldsymbol{\nabla} \cdot \mathbf{T}\left( C_n(t)\right), \boldsymbol{u}_n(t)}$, from equation \eqref{4}, the above equation further simplifies to
\begin{align}\label{12}
\dfrac{1}{2}\dfrac{d}{dt}\left\| \boldsymbol{u}_n(t)\right\| _{L^{2}}^{2}+\mu_{e} \left\| \boldsymbol{\nabla} \boldsymbol{u}_n(t)\right\| _{L^{2}}^{2}+\alpha \mu \left\| \boldsymbol{u}_n(t)\right\| _{L^{2}}^{2} = -\hat \delta\ \big(\Delta C_n(t)\boldsymbol{\nabla} C_n(t) , \boldsymbol{u}_n(t)\big) - \beta \mu \big(C_n(t) \boldsymbol{u}_n(t), \boldsymbol{u}_n(t)\big) + \big(\boldsymbol{f}(t),\boldsymbol{u}_n(t)\big),   
\end{align}
$a.  e. ~ on ~ (0,T)$. Adding equations \eqref{11} and \eqref{12}, we have
\begin{align}\label{12a}
 &\dfrac{1}{2}\dfrac{d}{dt}\left(\left\| \boldsymbol{u}_n(t)\right\| _{L^{2}}^{2} + \hat \delta\left\| \boldsymbol{\nabla} C_n(t)\right\| _{L^{2}}^{2}\right)+ \alpha \mu\left\| \boldsymbol{u}_n(t)\right\| _{L^{2}}^{2}+ \left( d\hat \delta - \epsilon \hat \delta \right) \left\|\Delta C_n(t) \right\| _{L^{2}}^{2} + \mu_{e} \left\| \boldsymbol{\nabla} \boldsymbol{u}_n(t)\right\| _{L^{2}}^{2} \nonumber \\ & \leq - \beta \mu \big(C_n(t) \boldsymbol{u}_n(t), \boldsymbol{u}_n(t)\big)+ M(\epsilon) \hat \delta \|g\|_{L^{\infty}(\Omega)}^{2}  \left\|  C_n(t)\right\| _{L^{2}}^{2}  + \big(\boldsymbol{f}(t),\boldsymbol{u}_n(t)\big) ~ ~ a.  e. ~ on ~ (0,T).   
\end{align}
By employing the Gagliardo-Nirenberg inequality (Theorem \ref{gagliardo}), Holder's inequality, and Young's inequality (Remark \ref{youngs}), we derive the following estimate:
\begin{align*}
    \left| - \beta \mu \big(C_n(t) \boldsymbol{u}_n(t), \boldsymbol{u}_n(t)\big) \right| &\leq \beta \mu \|C_n(t) \|_{L^2} \| \boldsymbol{u}_n(t)\|_{L^4}^{2}\\ & \leq M \beta \mu \|C_{0}\|_{L^2} \|\boldsymbol{u}_n(t)\|_{L^2} \|\boldsymbol{\nabla} \boldsymbol{u}_n(t)\|_{L^2}\\ & \leq \epsilon \|\boldsymbol{\nabla} \boldsymbol{u}_n(t)\|_{L^2}^{2} + \beta^2 \mu^2 M^2(\epsilon) \|C_{0}\|_{L^2}^{2} \|\boldsymbol{u}_n(t)\|_{L^2}^{2}  ~ ~ a.  e. ~ on ~ (0,T).
\end{align*}
Also, we have \begin{align*}
    \left| \big(\boldsymbol{f}(t),\boldsymbol{u}_n(t)\big)\right| \leq \frac{1}{2}\|\boldsymbol{f}(t)\|_{L^2}^{2} + \frac{1}{2}\|\boldsymbol{u}_n(t)\|_{L^2}^{2}  ~ ~ a.  e. ~ on ~ (0,T).
\end{align*}
Using the above estimates in equation \eqref{12a}, we arrive at
\begin{align}\label{13}
 &\dfrac{1}{2}\dfrac{d}{dt}\left(\left\| \boldsymbol{u}_n(t)\right\| _{L^{2}}^{2} + \hat \delta\left\| \boldsymbol{\nabla} C_n(t)\right\| _{L^{2}}^{2}\right)+ \alpha \mu\left\| \boldsymbol{u}_n(t)\right\| _{L^{2}}^{2}+\left( d\hat \delta - \epsilon \hat \delta \right)\left\|\Delta C_n(t) \right\| _{L^{2}}^{2} + \mu_{e} \left\| \boldsymbol{\nabla} \boldsymbol{u}_n(t)\right\| _{L^{2}}^{2} \nonumber \\ & \leq \epsilon  \left\| \boldsymbol{\nabla} \boldsymbol{u}_n(t)\right\|_{L^{2}}^{2} + \beta^2 \mu^2  M^{2}(\epsilon) \left\| C_0\right\|_{L^{2}}^{2}\left\| \boldsymbol{u}_n(t)\right\| _{L^{2}}^{2} + \left( \dfrac{1}{2}\left\|\boldsymbol{f}(t)\right\| _{L^{2}}^{2}+\dfrac{1}{2}\left\| \boldsymbol{u}_n(t)\right\| _{L^{2}}^{2}\right)\nonumber \\ &+  M(\epsilon) \hat \delta \|g\|_{L^{\infty}(\Omega)}^{2} \left\|  C_n(t)\right\| _{L^{2}}^{2}  ~ ~ a.  e. ~ on ~ (0,T).   \end{align}
Now we choose $\epsilon>0$ such that $ \epsilon < \min  \left\{\mu_{e}/ 2,  
 d/ 2\right\}$ and ignore all the non-negative terms on the left-hand side of equation \eqref{13}, to obtain
\begin{align*}
    \dfrac{1}{2}\dfrac{d}{dt}\left(\left\| \boldsymbol{u}_n(t)\right\| _{L^{2}}^{2} + \hat \delta\left\| \boldsymbol{\nabla} C_n(t)\right\| _{L^{2}}^{2}\right) \leq \dfrac{1}{2}\left\|\boldsymbol{f}(t)\right\| _{L^{2}}^{2} + M \hat \delta \|g\|_{L^{\infty}(\Omega)}^{2} \left\|  C_n(t)\right\| _{L^{2}}^{2} + \left(\dfrac{1}{2} + \beta^2 \mu^2  M^{2} \left\| C_0\right\|_{L^{2}}^{2} \right)\left\| \boldsymbol{u}_n(t)\right\| _{L^{2}}^{2}
\end{align*}
$a.  e. ~ on ~ (0,T)$. Adding a positive term, $\left(\dfrac{1}{2} + \beta^2 \mu^2  M^{2} \left\| C_0\right\|_{L^{2}}^{2} \right) \hat \delta\left\| \boldsymbol{\nabla} C_n(t)\right\| _{L^{2}}^{2} $ in the RHS of the above inequality and then  integrating each term in time from $0$ to some $\tau \in (0,T)$, we have
\begin{align*}
\left\| \boldsymbol{u}_n(\tau)\right\| _{L^{2}}^{2} + \hat \delta\left\| \boldsymbol{\nabla} C_n(\tau)\right\| _{L^{2}}^{2} &\leq \left\| \boldsymbol{u}_n(0)\right\| _{L^{2}}^{2} + \hat \delta\left\| \boldsymbol{\nabla} C_n(0)\right\| _{L^{2}}^{2} +\int ^{\tau}_{0}\left\|\boldsymbol{f}(t)\right\| _{L^{2}}^{2}\\  &+ 2 M \hat \delta \|g\|_{L^{\infty}(\Omega)}^{2}\int_{0}^{\tau}   \left\|  C_n(t)\right\| _{L^{2}}^{2} + \left(1 + 2\beta^2 \mu^2  M^{2} \left\| C_0\right\|_{L^{2}}^{2}  \right) \int ^{\tau}_{0} \left(\left\| \boldsymbol{u}_n(t)\right\| _{L^{2}}^{2} + \hat \delta\left\| \boldsymbol{\nabla} C_n(t)\right\| _{L^{2}}^{2}\right). 
\end{align*}
Grönwall's inequality (Theorem \ref{GR}) in the above inequality combined with Lemma \ref{le41} proves that the velocity field sequence $\boldsymbol{u}_n$ is uniformly bounded in $ L^{\infty }\left( 0,T;S_{1}\right)$  and the concentration sequence  $C_n$  is uniformly bounded in  $L^{\infty }\left( 0,T;H^1\right)$ space. Now from equation \eqref{13}, we also get the sequence $\boldsymbol{u}_n$ is uniformly bounded in $L^{2 }\left( 0,T;V_{1}\right)$  and the sequence  $C_n$ is uniformly bounded in $L^{2 }\left( 0,T;V_{2}\right)$ space.
\end{proof}

%%%%%%%%%%%%% Lemma 4.3 %%%%%%%%%%%%%%%%%%%%%%%%%%%%%%%%%%%%%%%%%%%%%5
%%%%%%%%%%%%%%%%%%%%%%%%%%%%%%%%%%%%%%%%%%%%%%%%%%%%%%%%%%%%%%%%%%%%%%%%%%%%%%

\begin{lem}\label{le43} The sequence $\frac{\partial C_n}{\partial t}$ is uniformly bounded in  $L^{2}\left( 0,T;L^{2}\right) $ space.
\end{lem}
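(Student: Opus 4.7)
The natural test function for extracting an estimate on $\partial_t C_n$ in the $L^2$-norm is $\partial_t C_n$ itself. Since $\partial_t C_n(t) = \sum_{j=1}^n (\beta_j^n)'(t) z_j \in (V_2)_n$, this is admissible: multiplying \eqref{15} by $(\beta_j^n)'(t)$ and summing over $j=1,\dots,n$ yields
\begin{equation*}
\left\|\dfrac{\partial C_n(t)}{\partial t}\right\|_{L^2}^2 + \dfrac{d}{2}\dfrac{d}{dt}\|\boldsymbol{\nabla} C_n(t)\|_{L^2}^2 = - \big(\boldsymbol{u}_n(t)\cdot\boldsymbol{\nabla} C_n(t),\, \partial_t C_n(t)\big) - \big(g\, C_n(t),\, \partial_t C_n(t)\big)
\end{equation*}
a.e.\ on $(0,T)$. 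The remaining work is to absorb the two coupling terms into $\epsilon\|\partial_t C_n\|_{L^2}^2$ plus quantities already controlled by Lemma \ref{le41} and Lemma \ref{le42}.

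The reaction term is immediate: H\"older followed by Young's inequality (Remark \ref{youngs}) gives $|(g C_n,\partial_t C_n)| \le \epsilon\|\partial_t C_n\|_{L^2}^2 + C_\epsilon \|g\|_{L^\infty}^2 \|C_n\|_{L^2}^2$, and the last factor is integrable in time thanks to Lemma \ref{le41}. The convective term is the main obstacle and must be treated carefully. Applying H\"older and the two-dimensional Gagliardo--Nirenberg inequality (Theorem \ref{gagliardo}) to $\boldsymbol{u}_n$ and to the vector field $\boldsymbol{\nabla} C_n$, followed by Young's inequality, produces
\begin{equation*}
|(\boldsymbol{u}_n\cdot\boldsymbol{\nabla} C_n,\partial_t C_n)| \le \|\boldsymbol{u}_n\|_{L^4}\|\boldsymbol{\nabla} C_n\|_{L^4}\|\partial_t C_n\|_{L^2} \le \epsilon\|\partial_t C_n\|_{L^2}^2 + C_\epsilon\, \|\boldsymbol{u}_n\|_{L^2}\|\boldsymbol{\nabla}\boldsymbol{u}_n\|_{L^2}\|\boldsymbol{\nabla} C_n\|_{L^2}\|\boldsymbol{\nabla}^2 C_n\|_{L^2}.
\end{equation*}
The precise way the powers split here is essential: it is tuned so that after absorbing the $\epsilon$-piece on the left, the remaining factors either lie in $L^\infty_t L^2_x$ or in $L^2_t L^2_x$ in a balanced way.

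Now integrate the identity over $[0,T]$. The term $\frac{d}{2}\|\boldsymbol{\nabla} C_n(T)\|_{L^2}^2$ is non-negative and $\|\boldsymbol{\nabla} C_n(0)\|_{L^2} \le \|C_0\|_{H^1}$. For the convective contribution, Lemma \ref{le42} supplies uniform bounds on $\|\boldsymbol{u}_n\|_{L^\infty(0,T;L^2)}$ and $\|\boldsymbol{\nabla} C_n\|_{L^\infty(0,T;L^2)}$, which I pull out of the time integral, leaving
\begin{equation*}
\int_0^T \|\boldsymbol{\nabla}\boldsymbol{u}_n\|_{L^2}\|\boldsymbol{\nabla}^2 C_n\|_{L^2}\,dt \le \|\boldsymbol{\nabla}\boldsymbol{u}_n\|_{L^2(0,T;L^2)}\, \|\boldsymbol{\nabla}^2 C_n\|_{L^2(0,T;L^2)}
\end{equation*}
by Cauchy--Schwarz; both factors are uniformly bounded in $n$ through the $L^2(0,T;V_1)$ and $L^2(0,T;V_2)$ bounds of Lemma \ref{le42} (the latter includes $\|\boldsymbol{\nabla}^2 C_n\|_{L^2}$ since $V_2 \subset H^2$).

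Choosing $\epsilon$ small enough that the two $\epsilon\|\partial_t C_n\|_{L^2}^2$ terms combine into, say, $\tfrac{1}{2}\|\partial_t C_n\|_{L^2}^2$ on the right and subtracting, one obtains
\begin{equation*}
\tfrac{1}{2}\int_0^T \|\partial_t C_n(t)\|_{L^2}^2\,dt \le \tfrac{d}{2}\|C_0\|_{H^1}^2 + M,
\end{equation*}
uniformly in $n$. The only delicate point in the whole argument is the balancing of powers in the Gagliardo--Nirenberg step so that the surviving time integral pairs $\|\boldsymbol{\nabla}\boldsymbol{u}_n\|_{L^2(0,T;L^2)}$ with $\|\boldsymbol{\nabla}^2 C_n\|_{L^2(0,T;L^2)}$; any other split would demand estimates beyond those furnished by Lemmas \ref{le41}--\ref{le42}.
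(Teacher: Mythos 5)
Your proposal is correct and follows essentially the same route as the paper: test the Galerkin equation with $\partial_t C_n$, bound the convective term via H\"older, Gagliardo--Nirenberg and Young, and close using the uniform bounds of Lemmas \ref{le41}--\ref{le42}. The only cosmetic difference is that you keep the diffusion term as $\tfrac{d}{2}\tfrac{d}{dt}\|\boldsymbol{\nabla} C_n\|_{L^2}^2$ and integrate in time with an $\epsilon$-absorption, whereas the paper writes it as $d\|\Delta C_n\|_{L^2}\|\partial_t C_n\|_{L^2}$ and divides through by $\|\partial_t C_n\|_{L^2}$ to obtain a pointwise-in-time bound by functions in $L^2(0,T)$; both are valid.
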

\begin{proof}
Multiplying equation \eqref{9} by  $ \frac{\partial \beta^{n}_j(t)}{\partial t} $, and summing it from $j=1$ to $n$, we arrive at 
\begin{align*}
\left\| \dfrac{\partial C_n(t)}{\partial t}\right\| _{L^{2}}^2 \leq d\left\| \Delta C_n(t)\right\| _{L^{2}} \left\| \dfrac{\partial C_n(t)}{\partial t}\right\| _{L^{2}} + \left\| \boldsymbol{u}_n(t)\cdot \boldsymbol{\nabla} C_n(t)\right\| _{L^{2}} \left\| \dfrac{\partial C_n(t)}{\partial t}\right\| _{L^{2}} + \left\| C_n(t)g\right\| _{L^{2}} \left\| \dfrac{\partial C_n(t)}{\partial t}\right\| _{L^{2}}.  \end{align*}
\begin{align*}
\left\| \dfrac{\partial C_n(t)}{\partial t}\right\| _{L^{2}} &\leq d\left\| \Delta C_n(t)\right\| _{L^{2}}+\left\| \boldsymbol{u}_n(t)\cdot \boldsymbol{\nabla} C_n(t)\right\| _{L^{2}}+\left\| C_n(t)g\right\| _{L^{2}}\\ &\leq d\left\| \Delta C_n(t)\right\| _{L^{2}}+\left\| \boldsymbol{u}_n(t)\right\| _{L^{4}}\left\| \boldsymbol{\nabla} C_n(t)\right\| _{L^{4}}+\left\| g\right\| _{L^{\infty }}\left\| C_n(t)\right\| _{L^{2}} ~ ~ a.  e. ~ on ~ (0,T).    
\end{align*}
From Gagliardo–Nirenberg inequality (Theorem \ref{gagliardo}), Young's inequality (Remark \ref{youngs}), and the above inequality, we further arrive at
\begin{align*}
\left\| \dfrac{\partial C_n(t)}{\partial t}\right\| _{L^{2}}&\leq d\left\| \Delta C_n(t)\right\| _{L^{2}}+M\left\| \boldsymbol{u}_n(t)\right\| _{L^{2}}^{1/2}\left\|  \boldsymbol{u}_n(t)\right\| _{H^{1}}^{1/2}\left\|  C_n(t)\right\| _{H^{1}}^{1/2}\left\|  C_n(t)\right\| _{H^{2}}^{1/2}+\left\| g\right\| _{L^{\infty }}\left\| C_n(t)\right\| _{L^{2}} \\ & \leq  d\left\|\Delta C_n(t)\right\| _{L^{2}}+M\big(\left\| \boldsymbol{u}_n(t)\right\| _{H^{1}} + \left\| C_n(t)\right\| _{H^{2}}\big)+\left\| g\right\| _{L^{\infty }}\left\| C_n(t)\right\| _{L^{2}} ~ ~ a.  e. ~ on ~ (0,T).    
\end{align*}
Again from Lemma \ref{le42}, we observe all the terms present on the right-hand side of the above inequality are uniformly bounded in $L^2(0,T)$. Therefore we can conclude that $\frac{\partial C_n}{\partial t}$ is uniformly bounded in  $L^{2}\left( 0,T;L^{2}\right)$ space.
\end{proof}

%%%%%%%%%%%%% Lemma 4.4 %%%%%%%%%%%%%%%%%%%%%%%%%%%%%%%%%%%%%%%%%%%%%5
%%%%%%%%%%%%%%%%%%%%%%%%%%%%%%%%%%%%%%%%%%%%%%%%%%%%%%%%%%%%%%%%%%%%%%%%%%%%%%

\begin{lem}\label{le44} The sequence 
$\frac{\partial \boldsymbol{u}_n}{\partial t}$ is uniformly bounded in $L^{2}\left( 0,T;V^{\ast}_{1}\right) $ space.
\end{lem}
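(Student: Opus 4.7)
The plan is to estimate $\|\partial_t \boldsymbol{u}_n(t)\|_{V_1^*}$ by testing the Galerkin equation \eqref{16} against an arbitrary $\boldsymbol{v} \in V_1$ with $\|\boldsymbol{v}\|_{V_1}\le 1$. Since $\partial_t \boldsymbol{u}_n(t) \in (V_1)_n$, I use the orthogonal projection $P_n$ onto $(V_1)_n$ (obtained via a basis, e.g.\ eigenfunctions of the Stokes operator, that diagonalizes the $L^2$ and $V_1$ inner products simultaneously) so that $\innerproduct{\partial_t \boldsymbol{u}_n, \boldsymbol{v}} = \innerproduct{\partial_t \boldsymbol{u}_n, P_n\boldsymbol{v}}$ and $\|P_n\boldsymbol{v}\|_{V_1} \le \|\boldsymbol{v}\|_{V_1}$. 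Substituting into \eqref{16} expresses $\innerproduct{\partial_t \boldsymbol{u}_n,\boldsymbol{v}}$ as the sum of the viscous, Darcy, Korteweg and body-force contributions evaluated at $P_n\boldsymbol{v}$, each of which will be estimated separately.

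Three terms are routine. The viscous term gives $|\mu_e(\boldsymbol{\nabla} \boldsymbol{u}_n, \boldsymbol{\nabla} P_n\boldsymbol{v})| \le \mu_e \|\boldsymbol{\nabla} \boldsymbol{u}_n\|_{L^2}\|\boldsymbol{v}\|_{V_1}$, with $\|\boldsymbol{\nabla} \boldsymbol{u}_n\|_{L^2} \in L^2(0,T)$ by Lemma \ref{le42}. The body force satisfies $|(\boldsymbol{f}, P_n\boldsymbol{v})| \le M\|\boldsymbol{f}\|_{L^2}\|\boldsymbol{v}\|_{V_1}$ via Poincaré, with $\boldsymbol{f} \in L^2(0,T;L^2)$ by hypothesis. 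For the Darcy term, the $\alpha$-part is controlled by $\|\boldsymbol{u}_n\|_{L^2} \in L^\infty(0,T)$ from Lemma \ref{le42}, while for $\mu\beta(C_n \boldsymbol{u}_n, P_n\boldsymbol{v})$ Hölder and Gagliardo--Nirenberg (Theorem \ref{gagliardo}) give a bound by $M\|C_n\|_{L^4}\|\boldsymbol{u}_n\|_{L^2}^{1/2}\|\boldsymbol{\nabla}\boldsymbol{u}_n\|_{L^2}^{1/2}\|\boldsymbol{v}\|_{V_1}$, whose time-dependent factor is in $L^2(0,T)$ because $C_n \in L^\infty(0,T;H^1) \hookrightarrow L^\infty(0,T;L^4)$ and $\boldsymbol{u}_n \in L^\infty(0,T;L^2)\cap L^2(0,T;V_1)$ by Lemmas \ref{le41} and \ref{le42}.

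The main obstacle is the Korteweg contribution $\innerproduct{\boldsymbol{\nabla}\cdot\mathbf{T}(C_n), P_n\boldsymbol{v}} = -\hat{\delta}(\boldsymbol{\nabla} C_n \Delta C_n, P_n\boldsymbol{v})$ via \eqref{4}. A direct Hölder split $\hat{\delta}\|\boldsymbol{\nabla} C_n\|_{L^4}\|\Delta C_n\|_{L^2}\|P_n\boldsymbol{v}\|_{L^4}$ cannot be squared and integrated in time, since $\|\Delta C_n\|_{L^2}$ is only known to lie in $L^2(0,T)$, not $L^3(0,T)$. The remedy is to exploit $\boldsymbol{\nabla}\cdot P_n\boldsymbol{v} = 0$ and $P_n\boldsymbol{v}|_{\partial\Omega}=0$ together with the identity
\[
\boldsymbol{\nabla} C \,\Delta C = \boldsymbol{\nabla}\cdot(\boldsymbol{\nabla} C \otimes \boldsymbol{\nabla} C) - \tfrac{1}{2}\boldsymbol{\nabla}|\boldsymbol{\nabla} C|^2.
\]
Integration by parts then converts the troublesome term into $(\boldsymbol{\nabla} C_n \Delta C_n, P_n\boldsymbol{v}) = -(\boldsymbol{\nabla} C_n \otimes \boldsymbol{\nabla} C_n, \boldsymbol{\nabla} P_n\boldsymbol{v})$, whence the Korteweg term is bounded by $\hat{\delta}\|\boldsymbol{\nabla} C_n\|_{L^4}^2\|\boldsymbol{v}\|_{V_1}$. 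Applying Gagliardo--Nirenberg to $\boldsymbol{\nabla} C_n \in H^1$ yields $\|\boldsymbol{\nabla} C_n\|_{L^4}^2 \le M \|\boldsymbol{\nabla} C_n\|_{L^2}\|C_n\|_{H^2}$, and squaring-then-integrating gives $\int_0^T \|\boldsymbol{\nabla} C_n\|_{L^2}^2\|C_n\|_{H^2}^2\,dt \le \sup_t\|\boldsymbol{\nabla} C_n\|_{L^2}^2 \int_0^T \|C_n\|_{H^2}^2\,dt$, which is finite by Lemma \ref{le42}.

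Combining the four bounds into $|\innerproduct{\partial_t \boldsymbol{u}_n(t), \boldsymbol{v}}| \le F(t)\|\boldsymbol{v}\|_{V_1}$ with $F \in L^2(0,T)$, then taking the supremum over $\boldsymbol{v}\in V_1$ with $\|\boldsymbol{v}\|_{V_1}\le 1$ and integrating in time, delivers the desired uniform-in-$n$ bound of $\partial_t \boldsymbol{u}_n$ in $L^2(0,T;V_1^*)$.
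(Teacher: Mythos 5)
Your proof is correct and follows essentially the same route as the paper: a term-by-term estimate of the $V_1^{\ast}$ norm via H\"older and Gagliardo--Nirenberg, with the Korteweg term controlled through its divergence structure --- the paper writes $\boldsymbol{\nabla}\cdot\mathbf{T}(C_n)$ (modulo gradients, which vanish against divergence-free test functions) as a sum of terms $D_i\left(D_jC_n\,D_kC_n\right)$ and bounds $\left\|D_jC_n\,D_kC_n\right\|_{L^2}\leq M\left\|C_n\right\|_{H^1}\left\|C_n\right\|_{H^2}$, which is exactly your $\left\|\boldsymbol{\nabla} C_n\right\|_{L^4}^2$ estimate, landing in $L^2(0,T)$ by Lemma \ref{le42}. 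Your explicit handling of the projection $P_n$ (requiring a basis stable in $V_1$) is a point of extra care that the paper glosses over with its ``arbitrary basis'' of $V_1$, but it does not change the substance of the argument.
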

\begin{proof}  Multiplying equation \eqref{16} by $\alpha_{j}^{n}$ and summing it from $j=1$ to $n$, and using Holder's inequality, we get
\begin{align}\label{14}
\left\| \dfrac{\partial \boldsymbol{u}_n(t)}{\partial t}\right\| _{V_{1}^{\ast }}\leq \mu_{e}\left\| \boldsymbol{\nabla} \boldsymbol{u}_n(t)\right\| _{L^{2}} + \alpha \mu \left\| \boldsymbol{u}_n(t)\right\| _{L^{2}} + \beta \mu \left\|C_n(t) \boldsymbol{u}_n(t) \right\|_{L^{2}} +\left\| \boldsymbol{\nabla} \cdot \mathbf{T}\left( C_n(t)\right) \right\| _{V_{1}^{\ast }} + \left\| \boldsymbol{f}(t) \right\|_{L^{2}} 
\end{align}
$a.  e. ~ on ~ (0,T)$. First, we estimate the last term on the right side of equation \eqref{14}. Note that $\boldsymbol{\nabla}\cdot \mathbf{T}(C_n)$ is a sum of expressions of the form  `` $D_{i}(D_{j}C_nD_{k}C_n)$" 
\begin{align*}
\left\| D_{i}\left( D_{j}C_n(t)D_{k}C_n(t)\right) \right\| _{V_{1}^{\ast }}\leq \left\| D_{j}C_n(t)D_{k}C_n(t)\right\| _{L^{2}} \leq \left\| D_{j}C_n(t)\right\| _{L^{4}}\left\| D_{k}C_n(t)\right\| _{L^{4}} ~ ~ a.  e. ~ on ~ (0,T).  \end{align*}
Using  Gagliardo–Nirenberg inequality (Theorem \ref{gagliardo}) and Young's inequality (Remark \ref{youngs}), we have
\begin{align*}
\left\| D_{i}\left( DjC_n(t)D_{k}C_n(t)\right) \right\| _{V_{1}^{\ast }}
&\leq M\left\| C_n(t)\right\| _{H^{1}}^{1/2}\left\| C_n(t)\right\| _{H^{2}}^{1/2}\left\|C_n(t)\right\| _{H^{1}}^{1/2}\left\| C_n(t)\right\| _{H^{2}}^{1/2} ~ ~ a.  e. ~ on ~ (0,T).   
\end{align*}
Again applying  Gagliardo–Nirenberg inequality (Theorem \ref{gagliardo}) and Young's inequality (Remark \ref{youngs}) gives us
 \begin{align*}
 \left\| C_n(t) \boldsymbol{u}_n(t)  \right\|_{L^2} \leq  \left\| C_n(t) \right\|_{L^4} \left\| \boldsymbol{u}_n(t)  \right\|_{L^4}  &\leq M \left\| C_n(t)   \right\|_{L^2}^\frac{1}{2} \left\| C_n(t)   \right\|_{H^1}^\frac{1}{2} \left\|  \boldsymbol{u}_n(t)  \right\|_{L^2}^\frac{1}{2} \left\|  \boldsymbol{u}_n(t)  \right\|_{H^1}^\frac{1}{2}\\ &\leq M \left( \left\| C_n(t)   \right\|_{H^1}  + \left\| \boldsymbol{u}_n(t)   \right\|_{H^1}    \right)~ ~  a.  e. ~ on ~ (0, T).
 \end{align*}
 Using both of these estimates and results from lemmas \ref{le41}-\ref{le42} in equation \eqref{14}, we observe that the sequence $\frac{\partial \boldsymbol{u}_n}{\partial t}$ is bounded in $L^{2}(0,T,V_{1}^{\ast})$ space. 
\end{proof}

\textbf{Proof of the Theorem 4.1.}  From lemmas \ref{le41}-\ref{le42}, we establish that
  $\boldsymbol{u}_{n}  $ and
	$C_n $  are uniformly bounded in     $ L^{\infty}\left(0,T;S_{1}\right)$   and $ L^{\infty}\left(0,T;H^1\right)$  respectively, $\forall~ T>0$.
 Consequently, employing continuation argument, we conclude that  $\boldsymbol{u}_{n}(t) $ and  $C_{n}(t)$  exist for all  $t \in (0,T)$. Additionally, based on Lemmas \ref{le41}-\ref{le44}, we can deduce the existence of a constant  $M > 0$ such that:
 $$\left\| C_n   \right\|_{L^{\infty}\left(0,T;H^1\right)} ,
\left\| C_n   \right\|_{L^{2}\left(0,T;V_{2}\right)},
\left\| \boldsymbol{u}_n   \right\|_{L^{\infty}\left(0,T;S_{1}\right)} ,\left\|\boldsymbol{u}_n   \right\|_{L^{2}\left(0,T;V_{1}\right)}, \left\| C_{n}'  \right\|_{L^{2}\left(0,T;L^2\right)}, \left\|\boldsymbol{u}_n'  \right\|_{L^{2}\left(0,T;V_1^{*}\right)} \leq M,$$
for all $T>0$.

By applying the diagonalization argument to the above estimates, we obtain sub-sequences which are again labelled by $C_n$ and $\boldsymbol{u}_n$, and those satisfy the following convergence result:

\begin{center}
$\begin{aligned}
C_n \rightarrow C \hspace{15pt} &  \text{weakly} &&\text{in }  L^{2}\left(0,T;V_2\right), \\
C_n \rightarrow C \hspace{15pt} & \text{weakly-}*  && \text{in }  L^{\infty}\left(0,T;H^1\right), \\
C_n'\rightarrow C'\hspace{12pt} &\text{weakly} && \text{in }  L^{2}\left(0,T;V^{\ast}_{2}\right), \\
\boldsymbol{u}_n \rightarrow \boldsymbol{u} \hspace{15pt}&\text{weakly} &&\text{in } L^{2}\left(0,T;V_1\right), \\
\boldsymbol{u}_n \rightarrow \boldsymbol{u} \hspace{15pt}&\text{weakly-}* &&\text{in }  L^{\infty}\left(0,T;S_{1}  \right), \\
\boldsymbol{u}_n'\rightarrow \boldsymbol{u}'\hspace{12pt} &\text{weakly} &&\text{in } L^{2}\left(0,T;V^{\ast}_{1}\right), 
\end{aligned}$
\end{center}

\paragraph{\textbf{Passing to the limit}}For passing the limit into equation \eqref{15}, we fix a $m \in \mathbb{N}$ such that $m \leq n$. Then for all $B \in (V_2)_m$, we have 
\begin{align}\label{l1}
 \left (\dfrac{\partial C_{n}(t)}{\partial t},B\right ) +d (\boldsymbol{\nabla} C_{n}(t) , \boldsymbol{\nabla} B) + (\boldsymbol{u}_n(t)\cdot \boldsymbol{\nabla} C_{n}(t),B)+ (g C_n(t), B)=0 ,~ ~  a.  e. ~ on ~ (0,T).   
\end{align}
Since $B$ is a linear combination of $z_j$ for $j \in \{1,2,3....m  \}$.
Now using $C_n \rightarrow C $ weakly in  $L^2(0,T;V_2)$ and $\frac{\partial C_n}{\partial t} \rightarrow \frac{\partial C}{\partial t} $ weakly in  $L^2(0,T;V^{*}_2)$, we see that
\begin{align}\label{l2}
 \lim_{n\rightarrow \infty} \left(\boldsymbol{\nabla} C_n,\boldsymbol{\nabla}B  \right)  &=  \left(\boldsymbol{\nabla} C,\boldsymbol{\nabla}B  \right),~ \forall B \in (V_2)_m ~ ~  a.  e. ~ on ~ (0,T),   \end{align}
 and
 \begin{align}\label{l5}
  \lim_{n\rightarrow \infty} \left(\frac{\partial C_n}{\partial t}, B\right) &= \left(\frac{\partial C_n}{\partial t}, B  \right),~ \forall B \in (V_2)_m ~ ~  a.  e. ~ on ~ (0,T).   
\end{align}
Using $g \in L^{\infty}(\Omega)$, and $C_n \rightarrow C $  in  $L^2(0,T;H^1)$,   we have 
\begin{align}\label{15a}
  \lim_{n\rightarrow \infty} \left(g C_n - g C, B\right) \leq \left\|g\right\|_{L^{\infty}(\Omega)} \lim_{n\rightarrow \infty}\|C_n - C \|_{L^2} \| B\|_{L^2}  = 0, ~ \forall B \in (V_2)_m ~ ~  a.  e. ~ on ~ (0,T).   \end{align}
Now we take the limit for the convection term in equation \eqref{15}. 
\begin{align}\label{17}
  \int_{\Omega} \boldsymbol{u}_n(t)\cdot \boldsymbol{\nabla} C_n(t) B - \boldsymbol{u}(t)\cdot \boldsymbol{\nabla} C(t) B &=  \int_{\Omega}\Big( \boldsymbol{u}_n(t)\cdot \boldsymbol{\nabla} C_n(t) -  \boldsymbol{u}_n(t)\cdot \boldsymbol{\nabla} C(t) + \boldsymbol{u}_n(t)\cdot \boldsymbol{\nabla} C(t) - \boldsymbol{u}(t)\cdot \boldsymbol{\nabla} C(t)\Big) B \nonumber\\ & \leq \left\| \boldsymbol{\nabla}C_n(t)-\boldsymbol{\nabla} C(t)  \right\|_{L^2} \left\|\boldsymbol{u}_n(t) B  \right\|_{L^2} + \left\|\boldsymbol{u}_n(t) - \boldsymbol{u}(t) \right\|_{L^2} \left\|\boldsymbol{\nabla}C(t)  B  \right\|_{L^2}
\end{align}
$a.  e. ~ on ~ (0, T)$. From the strong convergence of $C_n$ in $L^2(0,T;H^1)$ we have $\left\| \boldsymbol{\nabla}C_n(t)-\boldsymbol{\nabla} C(t)  \right\|_{L^2} \rightarrow 0 $ as $n \rightarrow \infty$ $a.  e. ~ on ~ (0,T)$. Similarly the strong convergence of $\boldsymbol{u}_n$ in $L^2(0,T;S_1)$ implies $\left\| \boldsymbol{u}_n(t)-\boldsymbol{u}(t)  \right\|_{L^2} \rightarrow 0 $ as $n \rightarrow \infty$ $a.  e. ~ on ~ (0,T)$. Having these convergence results and the observation that $\left\|\boldsymbol{u}_n(t) B  \right\|_{L^2}$ and $\left\|\boldsymbol{\nabla}C_{n}(t)  B  \right\|_{L^2}$ are bounded independent of $n$, we get
\begin{align}\label{l3}
\lim_{n\rightarrow \infty} \big( \boldsymbol{u}_n(t)\cdot \boldsymbol{\nabla} C_n(t), B   \big)=
      \big(\boldsymbol{u}(t)\cdot \boldsymbol{\nabla} C(t), B\big) ~ \quad a.  e. ~ on ~ (0,T)~ \forall ~ B \in (V_2)_m.
\end{align}
As we choose $m \in \mathbb{N}$ arbitrary, thus convergence results in equation \eqref{l2}, \eqref{l5}, \eqref{15a} and \eqref{l3} follows for all $B \in \cup_{m \ge 1} (V_2)_m $. Using the fact that $\cup_{m \ge 1} (V_{2})_m$ is dense in $V_2$, we observe that convergence results \eqref{l2}, \eqref{l5}, \eqref{15a}, and \eqref{l3} hold for all $B \in V_2$.
Thus using these convergence results from equation \eqref{l2}, \eqref{l5}, \eqref{15a}, and \eqref{l3}, passing the limit in equation \eqref{l1}, we conclude 
\begin{align*}
 \left (\dfrac{\partial C(t)}{\partial t},B\right ) +d (\boldsymbol{\nabla} C(t) , \boldsymbol{\nabla} B) + (\boldsymbol{u}(t)\cdot \boldsymbol{\nabla} C(t),B) + (g C(t), B)=0 ,~ \forall\, B \in V_2 ~ ~  a.  e. ~ on ~ (0,T).   
\end{align*}
Again for passing the limit in equation \eqref{16}, we fix a $m\in \mathbb{N}$ such that $m \leq n$. Then for all $B \in (V_1)_m$, we have
\begin{align}\label{l4}
\innerproduct*{\dfrac{\partial \boldsymbol{u}_{n}(t)}{\partial t},\boldsymbol{v}} +\mu_{e}(\boldsymbol{\nabla} \boldsymbol{u}_{n}(t) , \boldsymbol{\nabla} \boldsymbol{v}) +\alpha \mu ( \boldsymbol{u}_{n}(t), \boldsymbol{v}) + \beta \mu \left( C_n(t)\boldsymbol{u}_n(t), \boldsymbol{v}   \right)= \innerproduct{\boldsymbol{\nabla} \cdot \mathbf{T}\left( C_{n}(t)\right), \boldsymbol{v}} + \left(\boldsymbol{f}(t),\boldsymbol{v}\right),
\end{align}
$a.  e. ~ on ~ (0, T)$. Since $\boldsymbol{u}_n \rightarrow \boldsymbol{u}$ weakly in $L^2(0,T;V_1)$ and $\frac{\partial \boldsymbol{u}_n}{\partial t} \rightarrow \frac{\partial \boldsymbol{u}}{\partial t}$ weakly in $L^2(0,T;V^{*}_1)$, we can pass the limit in all the terms present on the left-hand side of equation \eqref{l4} except last one. We will pass the limit in non-linear terms one by one. We first pass the limit in the term present on the right-hand side of equation \eqref{l4}. Using the expression of $\innerproduct*{\boldsymbol{\nabla} \cdot \mathbf{T}\left( C_{n}(t)\right), \boldsymbol{v}}$ from equation \eqref{4}, we have
\begin{align*}
 \innerproduct*{\boldsymbol{\nabla} \cdot \mathbf{T}\left( C_{n}(t)\right), \boldsymbol{v}} = -\hat{\delta}  \int_{\Omega} \Delta C_n(t) \boldsymbol{\nabla} C_n(t) \cdot \boldsymbol{v} \quad ~ a.  e. ~ on ~ (0,T).
\end{align*}
We can further write each component of  $\Delta C_n\boldsymbol{\nabla} C_n \text{ by } \sum_{{i,j,k} \in \{1,2\}} a_{ijk} D_i\left(D_j C_n D_k C_n \right)$ for some constants $a_{ijk} \in \mathbb{R}$ and $i,j, k \in \{1,2\}$. Using integration by parts, we have
\begin{align}\label{18}
    -\hat{\delta} \sum_{{i,j,k} \in \{1,2\}} a_{ijk} \int_{\Omega} D_i\left(D_j C_{n}(t) D_k C_{n}(t)\right) {v} = \hat{\delta} \sum_{{i,j,k} \in \{1,2\}} a_{ijk} \int_{\Omega} \left(D_j C_{n}(t) D_k C_{n}(t)\right) D_i{v}
\end{align}
$a.  e. ~ on ~ (0,T)$. Now from Gagliardo–Nirenberg inequality (Theorem \ref{gagliardo}), we have
\begin{align*}
    \|  D_j C_n(t) D_k C_n(t) \|_{L^{2}} \leq M \| C_n(t) \|_{H^1}^{\frac{1}{2}} \| C_n(t) \|_{H^2}^{\frac{1}{2}} \|C_n(t)  \|_{H^1}^{\frac{1}{2}} \| C_n(t) \|_{H^2}^{\frac{1}{2}}  \quad ~ a.  e. ~ on ~ (0,T),
\end{align*}
and from Lemmas \ref{le41} -\ref{le44}, we know that all the terms present on the right-hand side of the above inequality are uniformly bounded in $L^2(0,T)$. This means that $D_j C_n(t) D_k C_n(t)$ is also uniformly bounded in $L^2(\Omega)  ~ a.  e. ~ on ~ (0,T).$ Using reflexive weak compactness, we obtain a weakly convergent subsequence $D_j C_n D_k C_n$ (the notation is unchanged for convenience) in $L^2(\Omega)$. Without loss of generality, we consider this weak limit as $D_j C D_k C$. Now this weak convergence of  $D_j C_n D_k C_n$ in $L^2(\Omega)  ~ a.  e. ~ on ~ (0,T)$ implies
\begin{align*}
    \lim_{n \rightarrow \infty} \int_{\Omega} \left(D_j C_{n}(t) D_k C_{n}(t)\right) D_i{v} = \int_{\Omega} \left(D_j C(t) D_k C(t)\right) D_i{v} \quad ~ a.  e. ~ on ~ (0,T).
\end{align*}
Utilizing this convergence result, we conclude
\begin{align*}
 \lim_{n \rightarrow \infty}\innerproduct*{\boldsymbol{\nabla} \cdot \mathbf{T}\left( C_{n}(t)\right), \boldsymbol{v}} = \innerproduct*{\boldsymbol{\nabla} \cdot \mathbf{T}\left( C(t)\right), \boldsymbol{v}}  \quad ~ a.  e. ~ on ~ (0,T) ~ \forall ~ \boldsymbol{v} \in (V_1)_m.
\end{align*}
Again we have,
\begin{align*}
   \big(C_n(t) \boldsymbol{u}_n(t) - C(t) \boldsymbol{u}(t) , \boldsymbol{v} \big) &=   \int_{\Omega} \big(C_n(t) \boldsymbol{u}_n(t) - C(t) \boldsymbol{u}(t)\big)\cdot \boldsymbol{v} \\ & = \int_{\Omega} \big(C_n(t) \boldsymbol{u}_n(t) - C_n(t) \boldsymbol{u}(t) + C_n(t) \boldsymbol{u}(t) - C(t) \boldsymbol{u}(t)\big)\cdot \boldsymbol{v} \\ & = \int_{\Omega} C_n(t) \big(\boldsymbol{u}_n(t) - \boldsymbol{u}(t)\big)\cdot \boldsymbol{v} + \int_{\Omega} \big(C_n(t) - C(t)\big)\boldsymbol{u}(t)\cdot\boldsymbol{v} \\ & \leq \left\|\boldsymbol{u}_n(t) - \boldsymbol{u}(t) \right\|_{L^2}  \left\| C_n(t) \boldsymbol{v}\right\|_{L^2} + \left\|C_n(t)-C(t)\right\|_{L^2} \left\|\boldsymbol{u}(t)\cdot\boldsymbol{v}\right\|_{L^2} 
  \\ & \leq \left\|\boldsymbol{u}_n(t) - \boldsymbol{u}(t) \right\|_{L^2}  \left\| C_n(t) \right\|_{L^4}\left\| \boldsymbol{v} \right\|_{L^4} + \left\|C_n(t)-C(t)\right\|_{L^2} \left\|\boldsymbol{u}(t)\right\|_{L^4}\left\| \boldsymbol{v} \right\|_{L^4} ~ ~  a.  e. ~ on ~ (0,T).
\end{align*}
Using Gagliardo-Nirenberg inequality (Theorem \ref{gagliardo}) in the above inequality, we have 
\begin{align*}&\big(C_n(t) \boldsymbol{u}_n(t) - C(t)\boldsymbol{u}(t)  , \boldsymbol{v} \big)\\ &\leq
   M \left\|\boldsymbol{u}_n(t) - \boldsymbol{u}(t) \right\|_{L^2}  \left\| C_n(t) \right\|_{L^2}^{\frac{1}{2}}\left\| C_n(t) \right\|_{H^1}^{\frac{1}{2}}\left\| \boldsymbol{v} \right\|_{L^2}^{\frac{1}{2}}\left\| \boldsymbol{v} \right\|_{H^1}^{\frac{1}{2}} + M \left\|C_n(t)-C(t)\right\|_{L^2} \left\|\boldsymbol{u}\right\|_{L^2}^{\frac{1}{2}}\left\|\boldsymbol{u}\right\|_{H^1}^{\frac{1}{2}}\left\| \boldsymbol{v} \right\|_{L^2}^{\frac{1}{2}} \left\| \boldsymbol{v} \right\|_{H^1}^{\frac{1}{2}},
\end{align*}
$a.  e. ~ on ~ (0, T)$. Using the result that $\left\|\boldsymbol{u}_n(t) - \boldsymbol{u}(t) \right\|_{L^2} \rightarrow 0 $ and $\left\|C_n(t)-C(t)\right\|_{L^2} \rightarrow 0 $ as $n \rightarrow \infty ~ ~  a.  e. ~ on ~ (0,T)$ and all the other terms are uniformly bounded in the above inequality, we conclude
\begin{align*}
    \lim_{n \rightarrow \infty} \big(C_n(t) \boldsymbol{u}_n(t),\boldsymbol{v}\big) = \big(C(t) \boldsymbol{u}(t), \boldsymbol{v}\big) ~  ~ a.  e. ~ on ~ (0,T) \text{ and } \forall ~ \boldsymbol{v} \in (V_1)_m.
\end{align*}
Again using the fact that $\cup_{m \ge 1} (V_1)_m$ is dense in $V_1$, we pass the limit in equation \eqref{l4}, we have 
\begin{align*}
\innerproduct*{\dfrac{\partial \boldsymbol{u}(t)}{\partial t},\boldsymbol{v}} +\mu_{e}(\boldsymbol{\nabla} \boldsymbol{u}(t) , \boldsymbol{\nabla} \boldsymbol{v}) + \mu\big((\alpha + \beta C(t))\boldsymbol{u}(t), \boldsymbol{v}\big) = \innerproduct{\boldsymbol{\nabla} \cdot \mathbf{T}\left( C(t)\right), \boldsymbol{v}} +(\boldsymbol{f}(t),\boldsymbol{v}) ,~\forall\,\boldsymbol{v} \in V_{1} ~ ~  a.  e. ~ on ~ (0,T).
\end{align*}
With this, we have proved that there exists a velocity field and concentration that satisfies the variational form given by equations \eqref{9}-\eqref{10}. In order to establish the existence of a pressure solution, we will utilize de Rham's (Theorem \ref{de rham}). Let

$$ \boldsymbol{h} = -\frac{\partial \boldsymbol{u}}{\partial t} - \mu (\alpha + \beta C)\boldsymbol{u}+ \mu_{e} \Delta \boldsymbol{u} + \boldsymbol{\nabla}\cdot \mathbf{T}(C)+\boldsymbol{f},$$
then 
$$\innerproduct*{\boldsymbol{h}, \boldsymbol{\phi}}= -\innerproduct*{\frac{\partial \boldsymbol{u}}{\partial t}, \boldsymbol{\phi}} - \mu \big((\alpha + \beta C)\boldsymbol{u},\boldsymbol{\phi}\big) - \mu_{e}\left( \boldsymbol{\nabla}\boldsymbol{u}, \boldsymbol{\nabla} \phi\right) + \innerproduct*{\boldsymbol{\nabla}\cdot \mathbf{T}(C),\boldsymbol{\phi}}+(\boldsymbol{f},\boldsymbol{\phi}), ~ \forall \boldsymbol{\phi} \in \mathbb{D}(\Omega).   $$
Now from equation \eqref{10}, we have $\innerproduct*{\boldsymbol{h}, \boldsymbol{\phi}} = 0, ~ \forall  \boldsymbol{\phi} \in \mathbb{D}(\Omega)$,
so by de Rham's (Theorem \ref{de rham}) there exist a function $p \in L^2(\Omega)$ such that $ \boldsymbol{h} = \boldsymbol{\nabla} p $.
This completes the proof of existence part of the Theorem \ref{th41}.
%%%%%%%%%%%%%%%%%% Uniqueness %%%%%%%%%%%%%%%%%%%%%%%%%%%%%%%
%%%%%%%%%%%%%%%%%%%%%%%%%%%%%%%%%%%%%%%%%%%%%%%%%%%%%%%%%%%%%%
%%%%%%%%%%%%%%%%%%%%%%%%%%%%%%%%%%%%%%%%%%%%%%%%%%%%%%%%%%%%%%
\paragraph{\textbf{Uniqueness of Solution}} 
Let $(\boldsymbol{u}_{1},C_{1},p_{1}) \text{ and } (\boldsymbol{u}_{2},C_{2},p_{2})$ be two solutions of the problem \eqref{1}-\eqref{6} in the sense of Definition \ref{def31}. If we put these solutions in equation \eqref{9} and subtract, then we have
\begin{align}\label{22a}
\int _{\Omega} \dfrac{\partial C(t)}{\partial t}B - d\int _{\Omega}\Delta C(t) B +\int _{\Omega}\left( \boldsymbol{u}_{1}(t)\cdot\boldsymbol{\nabla} C_{1}(t)-\boldsymbol{u}_{2}(t)\cdot \boldsymbol{\nabla}C_{2}(t)\right)B + \int_{\Omega}  C(t) g B = 0, ~\forall~ B \in V_2.    
\end{align}
$ a.  e. ~ on ~ (0,T)$, where $C=C_1-C_2$. Using $B =  C(t)$ in the above equation, and utilizing integration by parts and boundary conditions \eqref{5}, we obtain
\begin{align*}
&\frac{1}{2}\frac{d}{dt}\|C(t) \|_{L^2}^2
 + d\|\boldsymbol{\nabla} C(t)\|_{L^2}^2 \nonumber \\ & = -\int _{\Omega}\big( \boldsymbol{u}_{1}(t)\cdot\boldsymbol{\nabla} C_{1}(t) -  \boldsymbol{u}_{2}(t)\cdot\boldsymbol{\nabla} C_{1}(t)+  \boldsymbol{u}_{2}(t)\cdot\boldsymbol{\nabla} C_{1}(t) - \boldsymbol{u}_{2}(t)\cdot \boldsymbol{\nabla}C_{2}(t)\big)C(t) - \int_{\Omega}  C(t) g C(t)  ~ ~ a.  e. ~ on ~ (0,T).    
\end{align*}
\begin{align}\label{22b}
    \frac{1}{2}\frac{d}{dt}\|C(t) \|_{L^2}^2
 + d\|\boldsymbol{\nabla} C(t)\|_{L^2}^2 = -\int _{\Omega} \boldsymbol{u}(t)\cdot\boldsymbol{\nabla} C_{1}(t) C(t) - \int _{\Omega} \boldsymbol{u}_2(t)\cdot\boldsymbol{\nabla} C(t) C(t) - \int_{\Omega}  C(t) g C(t)  ~ ~ a.  e. ~ on ~ (0,T),
\end{align}
where $\boldsymbol{u}= \boldsymbol{u}_1- \boldsymbol{u}_2$. Now, the second term on the right-hand side above the equation will vanish as shown in lemma \ref{le41}. We estimate the first term in the right-hand side of the above equation using Holder's inequality, Gagliardo-Nirenberg inequality (Theorem \ref{gagliardo}), and Young's inequality (Remark \ref{youngs}), as the following: 
\begin{align*}
  \left| \int _{\Omega} \boldsymbol{u}(t)\cdot\boldsymbol{\nabla} C_{1}(t) C(t)    \right|  &\leq \|C(t)\|_{L^2} \| \boldsymbol{u}(t)\cdot\boldsymbol{\nabla} C_{1}(t) \|_{L^2}\\ &\leq \|C(t)\|_{L^2} \|\boldsymbol{u}(t)\|_{L^4} \|\boldsymbol{\nabla} C_{1}(t) \|_{L^4}\\ & \leq M \|C(t)\|_{L^2}\|\boldsymbol{u}(t)\|_{L^2}^{1/2} \|\boldsymbol{\nabla}\boldsymbol{u}(t)\|_{L^2}^{1/2} \|\boldsymbol{\nabla} C_{1}(t) \|_{L^2}^{1/2} \|\Delta C_{1}(t) \|_{L^2}^{1/2} \\ & \leq M \|C(t)\|_{L^2}^2 + \epsilon \|\boldsymbol{\nabla}\boldsymbol{u}(t)\|_{L^2}^{2} + M(\epsilon)\|\boldsymbol{u}(t)\|_{L^2}^{2} \|\boldsymbol{\nabla} C_{1}(t) \|_{L^2}^{2} \|\Delta C_{1}(t) \|_{L^2}^{2}
\end{align*}
$a.  e. ~ on ~ (0,T)$. Using this estimate in equation \eqref{22b}, we have 
\begin{align}\label{22c}
    \frac{1}{2}\frac{d}{dt}\|C(t) \|_{L^2}^2
 + d\|\boldsymbol{\nabla} C(t)\|_{L^2}^2 \leq \epsilon \|\boldsymbol{\nabla}\boldsymbol{u}(t)\|_{L^2}^{2} + M(\epsilon)\|\boldsymbol{u}(t)\|_{L^2}^{2} \|\boldsymbol{\nabla} C_{1}(t) \|_{L^2}^{2} \|\Delta C_{1}(t) \|_{L^2}^{2} + \big(M + \|g\|_{L^{\infty}(\Omega)}\big) \|C(t)\|_{L^2}^2
\end{align}
$a.  e. ~ on ~ (0,T)$.
Using ~$B=-\hat \delta\Delta C(t)$ in equation \eqref{22a}, we obtain
\begin{align*}
 &\int _{\Omega} \dfrac{\partial C(t) }{\partial t}(-\hat \delta\Delta C(t)) -d\int _{\Omega}\Delta C(t)(-\hat \delta\Delta C(t))+\int_{\Omega}  C(t) g (-\hat \delta\Delta C(t))\\ & = -\int _{\Omega}\big( \boldsymbol{u}_{1}(t)\cdot\boldsymbol{\nabla} C_{1}(t)-\boldsymbol{u}_{2}(t)\cdot \boldsymbol{\nabla} C_{2}(t)\big)(-\hat \delta\Delta C(t))   ~ ~ a.  e. ~ on ~ (0,T). 
\end{align*}
Integration by parts and boundary conditions \eqref{5} further gives us 
\begin{align*}
\hat \delta &\int _{\Omega}\dfrac{\partial(\boldsymbol{\nabla} C(t)) }{\partial t}\cdot \boldsymbol{\nabla} C(t) +d \hat \delta\int _{\Omega}\Delta C(t)\Delta C(t) + \hat \delta\int _{\Omega}\boldsymbol{u}_{2}(t)\cdot \boldsymbol{\nabla} C_{2}(t)\Delta C(t) -  \hat \delta \int _{\Omega}C(t) g \Delta C(t)\\ & = \hat \delta\int _{\Omega}\big(\boldsymbol{u}_{1}(t)\cdot \boldsymbol{\nabla} C_{1}(t)\Delta C(t)+ \boldsymbol{u}_{1}(t)\cdot \boldsymbol{\nabla} C_{2}(t)\Delta C(t)-\boldsymbol{u}_{1}(t)\cdot \boldsymbol{\nabla} C_{2}(t)\Delta C(t)\big)  ~ ~ a.  e. ~ on ~ (0,T).     
\end{align*}
Rearrangements of the terms of the above equation lead us to
\begin{align*}
\dfrac{ \hat \delta}{2}&\dfrac{d}{dt}\left\| \boldsymbol{\nabla} C(t)\right\| _{L^{2}}^{2}+d \hat \delta\left\| \Delta C(t)\right\| _{L^{2}}^{2}  \\&=  \hat \delta\int _{\Omega}\boldsymbol{u}_{1}(t)\cdot \boldsymbol{\nabla} C(t) \Delta C(t) + \hat \delta\int _{\Omega} \boldsymbol{u}(t)\cdot  \boldsymbol{\nabla} C_{2}(t)\Delta C(t)  + \hat \delta \int _{\Omega}C(t)g\Delta C(t) ~ ~ a.  e. ~ on ~ (0, T).
\end{align*}
Now as in lemma \ref{le42}, using the fact that $g \in L^{\infty}(\Omega)$  with Holder's and Young's inequality (Remark \ref{youngs}), on the term associated with $g$ in the right-hand side of the above equality, we arrive at
\begin{align}\label{22}
\dfrac{ \hat \delta}{2}\dfrac{d}{dt}\left\| \boldsymbol{\nabla} C(t)\right\| _{L^{2}}^{2}+\left(d \hat \delta  - \epsilon \hat \delta\right)\left\| \Delta C(t)\right\| _{L^{2}}^{2} &\leq  \hat \delta \int _{\Omega}\boldsymbol{u}_{1}(t)\cdot \boldsymbol{\nabla}  C(t) \Delta C(t) +  \hat \delta\int _{\Omega}\boldsymbol{u}(t)\cdot  \boldsymbol{\nabla} C_{2}\Delta C(t)\nonumber \\& + \hat \delta  M(\epsilon)  \|g\|_{L^{\infty}(\Omega)}^{2} \left\|  C(t)\right\| _{L^{2}}^{2} ~ ~ ~ a.  e. ~ on ~ (0, T).
\end{align}
Putting $(\boldsymbol{u}_1, C_1)$ and $(\boldsymbol{u}_2, C_2)$ in equation \eqref{10} and subtracting, we have
\begin{align*}
    &\innerproduct*{\dfrac{\partial \boldsymbol{u}(t) }{\partial t} ,\boldsymbol{v}} - \mu_{e}\left(\boldsymbol{\nabla}\boldsymbol{u}(t),\boldsymbol{\nabla}\boldsymbol{v}\right)+\alpha \mu \left( \boldsymbol{u}(t),\boldsymbol{v}  \right)\\& = \innerproduct{\boldsymbol{\nabla}\cdot \mathbf{T}(C_1(t)) -   \boldsymbol{\nabla}\cdot \mathbf{T}(C_2(t)) \big), \boldsymbol{v} } -  \beta \mu \big(\left(C_{1}(t)\boldsymbol{u}_{1}(t)-C_{2}(t)\boldsymbol{u}_{2}(t)\right), \boldsymbol{v}\big) ~ ~a.  e. ~ on ~ (0,T).
\end{align*}
Putting  expression of ``$\innerproduct{\boldsymbol{\nabla}\cdot \mathbf{T}(C(t)),\boldsymbol{v}}$" from equation \eqref{4}, and $\boldsymbol{v} = \boldsymbol{u}(t)$, in above equality, we conclude
\begin{align}\label{23}
 &\dfrac{1}{2}\dfrac{d}{dt}\left\| \boldsymbol{u}(t)\right\| _{L^{2}}^{2}+\mu_{e}\left\| \boldsymbol{\nabla} \boldsymbol{u}(t)\right\| _{L^{2}}^{2}+\alpha \mu\left\| \boldsymbol{u}(t)\right\| _{L^{2}}^{2}  +   \hat \delta \int _{\Omega}\Delta C_{1}(t)\boldsymbol{\nabla} C(t)\cdot \boldsymbol{u}(t) \nonumber \\& = -\hat \delta \int _{\Omega}\Delta C(t)\boldsymbol{\nabla} C_{2}(t)\cdot \boldsymbol{u}(t) -   \beta \mu \int_{\Omega}\left(C_{1}(t)\boldsymbol{u}_{1}(t)-C_{1}(t)\boldsymbol{u}_{2}(t)+C_{1}(t)\boldsymbol{u}_{2}(t)-C_{2}(t)\boldsymbol{u}_{2}(t)\right)\boldsymbol{u}(t).   
\end{align}
$a.  e. ~ on ~ (0,T)$. Then the summation of equations \eqref{22c}, \eqref{22} and \eqref{23} implies
\begin{align}\label{24}
&\dfrac{1}{2}\dfrac{d}{dt}\left(\left\| \boldsymbol{u}(t)\right\| _{L^{2}}^{2} + \|C(t)\|_{L^2}^2 +\hat \delta\left\| \boldsymbol{\nabla} C(t)\right\| _{L^{2}}^{2}\right)+\mu_{e}\left\| \boldsymbol{\nabla} \boldsymbol{u}(t)\right\| _{L^{2}}^{2}+\alpha \mu\left\| \boldsymbol{u}(t)\right\| _{L^{2}}^{2}  +  d \|\boldsymbol{\nabla}C(t) \|_{L^2}^2 + (d\hat \delta - \epsilon \hat \delta)\left\| \Delta C(t)\right\| _{L^{2}}^{2}\nonumber\\& \leq \hat \delta\int _{\Omega}\boldsymbol{u}_{1}(t)\cdot \boldsymbol{\nabla}  C(t) \Delta C(t)  -\hat \delta \int _{\Omega}\Delta C_{1}(t)\boldsymbol{\nabla} C(t)\cdot \boldsymbol{u}(t) \nonumber -   \beta \mu \int _{\Omega}\left(\left(C_{1}(t)\boldsymbol{u}(t)+C(t)\boldsymbol{u}_{2}(t)\right)\cdot\boldsymbol{u}(t)\right)  +  M(\epsilon)\hat \delta \|g\|_{L^{\infty}(\Omega)}^{2} \left\|  C(t)\right\| _{L^{2}}^{2}\nonumber \\ &  \epsilon \| \boldsymbol{\nabla} \boldsymbol{u} \|_{L^2}^2 + M(\epsilon)\|\boldsymbol{u}(t)\|_{L^2}^{2} \|\boldsymbol{\nabla} C_{1}(t) \|_{L^2}^{2} \|\Delta C_{1}(t) \|_{L^2}^{2} + \big(M + \|g\|_{L^{\infty}(\Omega)}\big) \|C(t)\|_{L^2}^2  ~ ~ a.  e. ~ on ~ (0,T).
\end{align}
First we estimate the terms present on the right-hand side of equation \eqref{24}. The use of Holder inequality, Gagliardo-Nirenberg inequality (Theorem \ref{gagliardo}), and Young's inequality (Remark \ref{youngs}) leads us to the following inequality
\begin{align}\label{25}
\left|\int _{\Omega} \Delta C_{1}(t)\boldsymbol{\nabla} C(t)\cdot \boldsymbol{u}(t)\right| &\leq \left\| \Delta C_{1}(t)\right\| _{L^{2}}\left\|  \boldsymbol{\nabla} C(t) \cdot \boldsymbol{u}(t)  \right\| _{L^{2}} \nonumber \\ &\leq M\left\| \Delta C_1(t)\right\| _{L^{2}}\left\| \boldsymbol{\nabla} C(t)\right\| _{L^{2}}^{1/2}\left\| \Delta C(t)\right\| _{L^{2}}^{1/2}\left\| \boldsymbol{u}(t)\right\| _{L^{2}}^{1/2}\left\| \boldsymbol{\nabla} \boldsymbol{u}(t)\right\| _{L^{2}}^{1/2}\nonumber\\ & \leq 2 \epsilon \big(\| \Delta C(t)\|_{L^2}\|\boldsymbol{\nabla}\boldsymbol{u} \|_{L^2}\big) + M(\epsilon) \| \Delta C_{1}(t)\|_{L^2}^{2} \|\boldsymbol{u}\|_{L^2} \|\boldsymbol{\nabla}C(t) \|_{L^2} \nonumber\\ &\leq  \epsilon \left(\left\| \boldsymbol{\nabla}\boldsymbol{u}(t)\right\| _{L^{2}}^{2}+\left\| \Delta C(t)\right\| _{L^{2}}^{2}\right) + M(\epsilon)\left\| \Delta C_{1}(t)\right\| _{L^{2}}^{2}(\left\| \boldsymbol{\nabla} C(t)\right\| _{L^{2}}^2 + \left\|  \boldsymbol{u}(t)\right\| _{L^{2}}^2)    
\end{align}
$a.  e. ~ on ~ (0,T)$.
Again using Holder inequality, Gagliardo-Nirenberg inequality (Theorem \ref{gagliardo}), and Young's inequality (Remark \ref{youngs}), we get the following estimate
\begin{align}\label{26}
 \left|\int _{\Omega} \boldsymbol{u}_{1}(t)\cdot \boldsymbol{\nabla} C(t)\Delta C(t)\right| &\leq \left\| \Delta C(t)\right\| _{L^{2}}\left\| \boldsymbol{\nabla} C(t)\cdot \boldsymbol{u}_{1}(t)\right\| _{L^{2}} \leq \left\| \Delta C(t)\right\| _{L^{2}}\left\| \boldsymbol{\nabla} C(t)\right\| _{L^{4}}\left\| \boldsymbol{u}_{1}(t)\right\| _{L^{4}} \nonumber\\ &\leq M\left\| \Delta C(t)\right\| _{L^{2}}\left\| \boldsymbol{\nabla} C(t)\right\| _{L^{2}}^{1/2}\left\| \Delta C(t)\right\| _{L^{2}}^{1/2}\left\| \boldsymbol{u}_{1}(t)\right\| _{L^{2}}^{1/2}\left\| \boldsymbol{\nabla} \boldsymbol{u}_{1}(t)\right\| _{L^{2}}^{1/2}\nonumber\\ & \leq \epsilon \left\| \Delta C(t)\right\| _{L^{2}}^{2}+ M(\epsilon)\left\| \boldsymbol{\nabla} C(t)\right\| _{L^{2}}^{2}\left\| \boldsymbol{u}_{1}(t)\right\| _{L^{2}}^{2}\left\| \boldsymbol{\nabla} \boldsymbol{u}_{1}(t)\right\| _{L^{2}}^{2} ~ ~ a.  e. ~ on ~ (0,T).   
\end{align}
Similarly, we have,
\begin{align}\label{27}
  \left|\int _{\Omega} C_{1}(t) \boldsymbol{u}(t)\cdot \boldsymbol{u}(t)\right| & \leq \left\|C_{1}(t)\right\| _{L^{2}}  \left\|\boldsymbol{u}(t)\cdot \boldsymbol{u}(t)\right\| _{L^{2}} \nonumber\\ &\leq   \epsilon\left\| \boldsymbol{\nabla} \boldsymbol{u}(t)\right\| _{L^{2}}^{2}+M(\epsilon)\left\|  \boldsymbol{u}(t)\right\| _{L^{2}}^{2} ~ ~ a.  e. ~ on ~ (0,T),     
\end{align}
and
\begin{align}\label{28}
 \left|\int _{\Omega} C(t) \boldsymbol{u}_{2}(t)\cdot \boldsymbol{u}(t)\right|& \leq \left\|C(t)\right\| _{L^{2}}  \left\| \boldsymbol{u}_{2}(t)\cdot \boldsymbol{u}(t)\right\| _{L^{2}}\nonumber\\ &\leq    \left\|C(t)\right\| _{L^{2}} \left\| \boldsymbol{u}_{2}(t)\right\| _{L^{4}}\left\|\boldsymbol{u}(t)\right\| _{L^{4}}\nonumber\\ &\leq M\|C(t) \|_{L^2}^2 + \epsilon \left\| \boldsymbol{\nabla} \boldsymbol{u}(t)\right\| _{L^{2}}^{2} + M(\epsilon) \left\|  \boldsymbol{u}_{2}(t)\right\| _{L^{2}}^2 \left\| \boldsymbol{\nabla} \boldsymbol{u}_{2}(t)\right\| _{L^{2}}^2 \left\|  \boldsymbol{u}(t)\right\| _{L^{2}}^2 ~ ~ a.  e. ~ on ~ (0,T).   
\end{align}
Then the estimates from equations \eqref{25}, \eqref{26}, \eqref{27}, and \eqref{28} in equation \eqref{24} is used to obtain the following inequality,
\begin{align*}
&\dfrac{1}{2}\dfrac{d}{dt}\left(\left\| \boldsymbol{u}(t)\right\| _{L^{2}}^{2}+\|C(t)\|_{L^2}^2+\hat \delta\left\| \boldsymbol{\nabla} C(t)\right\| _{L^{2}}^{2}\right)+\big(\mu_{e}- \epsilon(1+\hat \delta +2\beta \mu)\big)\left\| \boldsymbol{\nabla} \boldsymbol{u}(t)\right\| _{L^{2}}^{2}+\alpha \mu\left\| \boldsymbol{u}(t)\right\| _{L^{2}}^{2} + d \|\boldsymbol{\nabla}C_n(t)\|_{L^2}^2 \\ &+ \hat \delta\big(d - 3\epsilon\big)\left\| \Delta C(t)\right\| _{L^{2}}^{2} \leq  M(\epsilon) \hat \delta \Big( \left\| \Delta C_{1}(t)\right\| _{L^{2}}^{2}(\left\| \boldsymbol{\nabla} C(t)\right\| _{L^{2}}^2 + \left\|  \boldsymbol{u}(t)\right\| _{L^{2}}^2)  +\left\| \boldsymbol{\nabla} C(t)\right\| _{L^{2}}^{2}\left\| \boldsymbol{u}_{1}(t)\right\| _{L^{2}}^{2}\left\| \boldsymbol{\nabla} \boldsymbol{u}_{1}(t)\right\| _{L^{2}}^{2}\Big)\\& +\beta \mu M(\epsilon)\Big( \left\|  \boldsymbol{u}(t)\right\| _{L^{2}}^{2} +  \left\|  \boldsymbol{u}_{2}(t)\right\| _{L^{2}}^2 \left\| \boldsymbol{\nabla} \boldsymbol{u}_{2}(t)\right\| _{L^{2}}^2 \left\|  \boldsymbol{u}(t)\right\| _{L^{2}}^2 \Big)  + M(\epsilon)\|\boldsymbol{u}(t)\|_{L^2}^{2} \|\boldsymbol{\nabla} C_{1}(t) \|_{L^2}^{2} \|\Delta C_{1}(t) \|_{L^2}^{2} \\ &+   \Big( M\big(1+\beta \mu\big)+\big(1+\hat \delta M(\epsilon)\big)\|g\|_{L^{\infty}(\Omega)}^{2})\Big)  \left\|  C(t)\right\| _{L^{2}}^{2}   ~ ~ a.  e. ~ on ~ (0,T).  
\end{align*}
 Now if we choose $\epsilon>0$ such that  $0 < \epsilon < \min\left(\frac{\mu_e}{(1+\hat \delta +2\beta \mu)},\frac{d}{3}\right)$ and neglect all the non-negative terms on the left-hand side of  the above inequality, then we have
\begin{align}\label{29}
\dfrac{1}{2}\dfrac{d}{dt}\left(\left\| \boldsymbol{u}(t)\right\| _{L^{2}}^{2}+\|C(t)\|_{L^2}^2+\hat \delta\left\| \boldsymbol{\nabla} C(t)\right\| _{L^{2}}^{2}\right)  \leq M  \phi(t)\left(\left\| \boldsymbol{u}(t)\right\| _{L^{2}}^{2}+\|C(t)\|_{L^2}^2+\hat \delta\left\| \boldsymbol{\nabla} C(t)\right\| _{L^{2}}^{2}\right) ~ ~ a.  e. ~ on ~ (0,T).   
\end{align}
Where \begin{align*}
    \phi(t) & =   \left\| \Delta C_{1}(t)\right\| _{L^{2}}^{2} + \left\| \boldsymbol{u}_{1}(t)\right\| _{L^{2}}^{2}\left\| \boldsymbol{\nabla} \boldsymbol{u}_{1}(t)\right\| _{L^{2}}^{2} + \beta \mu +  \beta \mu \left\|  \boldsymbol{u}_{2}(t)\right\| _{L^{2}}^2 \left\| \boldsymbol{\nabla} \boldsymbol{u}_{2}(t)\right\| _{L^{2}}^2\\& +  \|\boldsymbol{\nabla} C_{1}(t) \|_{L^2}^{2} \|\Delta C_{1}(t) \|_{L^2}^{2}  +  \big(1+\beta \mu\big)+\big(1/M+\hat \delta \big)\|g\|_{L^{\infty}(\Omega)}^{2} ~ ~ a.  e. ~ on ~ (0,T).
\end{align*}
Integrating both side of equation \eqref{29} in time from 0 to some $\tau \in (0,T)$, we have
\begin{align*}
 \int ^{\tau}_{0}\dfrac{d}{dt}\left( \left\| \boldsymbol{u}(t)\right\| _{L^{2}}^{2}+\|C(t)\|_{L^2}^2 +\hat \delta \left\| \boldsymbol{\nabla} C(t)\right\| _{L^{2}}^{2}\right) \leq M\int ^{\tau}_{0} \phi(t) \left( \left\| \boldsymbol{u}(t)\right\| _{L^{2}}^{2}+ \|C(t)\|_{L^2}^2 +\hat \delta \left\| \boldsymbol{\nabla} C(t)\right\| _{L^{2}}^{2}\right). 
\end{align*}
This implies,
\begin{align*}
 \int ^{\tau}_{0}\dfrac{d}{dt}\left( \exp\left(-M\int ^{t}_{0}\phi \left( s\right) ds \right)  \left( \left\| \boldsymbol{u}(t)\right\| _{L^{2}}^{2}+\|C(t)\|_{L^2}^2+\hat \delta\left\| \boldsymbol{\nabla} C(t)\right\| _{L^{2}}^{2}\right)\right)\leq 0.   
\end{align*}
Which means,
\begin{align*}
\exp\left(-M\int ^{\tau}_{0}\phi \left( s\right) ds\right) \left( \left\| \boldsymbol{u}(\tau)\right\| _{L^{2}}^{2}+\|C(t)\|_{L^2}^2+\hat \delta \left\| \boldsymbol{\nabla} C(\tau)\right\| _{L^{2}}^2\right)  \leq \left\| \boldsymbol{u}\left( 0\right) \right\| _{L^{2}}^{2}+\|C(0)\|_{L^2}^2+\hat \delta \left\| \boldsymbol{\nabla} C\left( 0\right) \right\| _{L^{2}}^{2}\quad \forall~ \tau \in (0,T).    
\end{align*}
Now, since we know that $\boldsymbol{u}(0)=0$ and $C(0)=0$ from the standard setting, thus the above inequality proves the uniqueness of the solution for the problem given by \eqref{1}-\eqref{4} with boundary and initial data given by equation \eqref{5} and equation \eqref{6}, respectively. Moreover, the uniqueness of $p$ is ensured by the given constraint on $p$, i.e., $\int p =0$.\\
This completes the proof.

By setting $\beta = 0$ in the mathematical model defined by equations \eqref{1}-\eqref{6}, we derive a formulation commonly employed to depict flows in homogeneous reactive porous media. Consequently, we ascertain its well-posedness by demonstrating the same property established for the model discussed in Section \ref{sec:math_model}.

Moreover, when $\boldsymbol{f}=\boldsymbol{0}$ and $g=0$ in the problem described by equations \eqref{1}-\eqref{6}, we obtain a model representing non-reactive heterogeneous porous media flow devoid of external forces. The existence and uniqueness of its solution are straightforwardly established.

Again if we set $\beta = 0$, $\boldsymbol{f} = \boldsymbol{0}$, and $g = 0$ all together, then the domain in equations \eqref{1}-\eqref{6} can be replaced as $(t, x) \in$ $(0, \infty) \times \Omega$ where its well-posedness can be shown. This simplified model describes a homogeneous, non-reactive flow without any imposed external force. For this specific model, the following result holds.
\begin{corollary}\label{cor411}
    If we set $\beta = 0$, $\boldsymbol{f} = \boldsymbol{0}$, and $g = 0$ in problem defined by equations \eqref{1}-\eqref{6}, then for any initial condition $ (\boldsymbol{u}_{0},C_{0}) \in (S_{1},H^1) $, there exists a solution pair $(\boldsymbol{u},C)$ of the modified problem in domain $(t, x) \in$ $(0, \infty) \times \Omega$. Furthermore, we have
	$ \boldsymbol{u} \in L^{2}(0,\infty;V_{1}) \cap \mathcal{C}(\mathbb{R}^{+},S_{1})$  and $ C \in  L^{2}(0,\infty;V_{2}) \cap \mathcal{C}(\mathbb{R}^{+},H^{1}).$
\end{corollary}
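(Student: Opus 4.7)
The plan is to reduce the corollary to Theorem \ref{th41} together with uniform-in-time a priori estimates. Step one: for each finite $T > 0$, Theorem \ref{th41} applied with $\beta = 0$, $\boldsymbol{f} = \boldsymbol{0}$, $g = 0$ furnishes a unique weak solution $(\boldsymbol{u}^{(T)}, C^{(T)})$ on $(0, T)$ in the sense of Definition \ref{def31}, with the stated regularity. By the uniqueness part of Theorem \ref{th41}, whenever $T_1 < T_2$, the restriction of $(\boldsymbol{u}^{(T_2)}, C^{(T_2)})$ to $(0, T_1)$ coincides with $(\boldsymbol{u}^{(T_1)}, C^{(T_1)})$. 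This compatibility lets us patch the finite-time solutions into a globally defined pair $(\boldsymbol{u}, C)$ on $(0, \infty)$.

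Step two is to upgrade the estimates of Lemmas \ref{le41}--\ref{le42} to bounds independent of $T$, and this is where the specific structure of the simplified problem pays off. In Lemma \ref{le41} the reactive term vanishes, so the proof yields $\|C_n(\tau)\|_{L^2}^2 + 2d \int_0^\tau \|\boldsymbol{\nabla} C_n\|_{L^2}^2 \leq \|C_0\|_{L^2}^2$ for every $\tau > 0$, already independent of $\tau$. For the combined energy estimate, add the $H^1$-concentration estimate (prior to any Young splitting) to the $L^2$-velocity estimate: the two Korteweg couplings $\hat{\delta}(\boldsymbol{u}_n \cdot \boldsymbol{\nabla} C_n, \Delta C_n)$ and $-\hat{\delta}(\Delta C_n \boldsymbol{\nabla} C_n, \boldsymbol{u}_n)$ are equal pointwise and hence cancel exactly, while the terms carrying $\beta$, $\boldsymbol{f}$, $g$ all disappear. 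What remains is the pure dissipation identity
\begin{equation*}
\frac{1}{2}\frac{d}{dt}\bigl(\|\boldsymbol{u}_n\|_{L^2}^2 + \hat{\delta}\|\boldsymbol{\nabla} C_n\|_{L^2}^2\bigr) + \mu_e \|\boldsymbol{\nabla} \boldsymbol{u}_n\|_{L^2}^2 + \alpha\mu \|\boldsymbol{u}_n\|_{L^2}^2 + d\hat{\delta}\|\Delta C_n\|_{L^2}^2 \leq 0,
\end{equation*}
so after integration from $0$ to $\tau$ every quantity is bounded purely by the initial data $\|\boldsymbol{u}_0\|_{L^2}^2 + \hat\delta \|\boldsymbol{\nabla} C_0\|_{L^2}^2$. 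Consequently $\boldsymbol{u}_n$ is bounded in $L^\infty(0,\infty; S_1) \cap L^2(0,\infty; V_1)$ and $C_n$ in $L^\infty(0,\infty; H^1) \cap L^2(0,\infty; V_2)$, all uniformly in $n$ and $T$, with no invocation of Grönwall.

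The bounds on $\partial_t C_n$ and $\partial_t \boldsymbol{u}_n$ from Lemmas \ref{le43}--\ref{le44} inherit the same $T$-independence because their right-hand sides involve only the norms just controlled. A diagonal extraction then produces, on each finite interval, the weak and weak-$*$ convergences listed in the proof of Theorem \ref{th41}, and uniqueness ensures that the limits on nested intervals agree. Passing to the limit in the nonlinear terms—in particular in the Korteweg stress and the convective term of the transport equation—is identical to the argument already given for Theorem \ref{th41}. Continuity in time with values in $S_1$ and $H^1$ follows from the Aubin--Lions-type embedding of Theorem \ref{aubin-lions} and the remark following it, while the $L^2(0,\infty; \cdot)$ memberships follow from the $T$-uniform integrated estimates by monotone convergence.

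The only conceptual difference from Theorem \ref{th41} is the exact cancellation of the Korteweg coupling in the combined energy identity, which is what removes any exponential-in-$T$ growth and permits the global-in-time conclusion. No new analytic obstacle arises; uniqueness of $(\boldsymbol{u}, C)$ and of $p$ on $(0, \infty)$ is inherited from the finite-time statements.
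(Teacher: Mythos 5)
Your proposal is correct and follows essentially the same route as the paper: the paper's own proof likewise reuses the $\tau$-independent bound from Lemma \ref{le41} and specializes the combined estimate \eqref{12a} (where the Korteweg couplings have already cancelled) with $\beta=0$, $g=0$, $\boldsymbol{f}=\boldsymbol{0}$ to obtain exactly your pure dissipation inequality, integrates it to get bounds uniform in $T$ with no Grönwall step, and then invokes the previously established limit passage, which remains valid for all $T>0$. Your additional patching-by-uniqueness framing is harmless but not needed beyond what the uniform Galerkin estimates already give.
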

\begin{proof} From lemma \ref{le41}, we have
\begin{align}\label{31}
\left\| C_n\left(\tau\right)\right\| _{L^{2}\left(\Omega \right) }^{2} + 2 d \int_{0}^{\tau}{\left\| \boldsymbol{\nabla} C_n(t)\right\|}^{2} _{L^{2}\left( \Omega\right) } \leq \left\| C_n\left(0\right)\right\| _{L^{2}\left(\Omega \right) }^{2}\leq \left\| C(0)\right\| _{L^{2}(\Omega)}^2,\hspace{5pt} \forall \tau > 0.     
\end{align}
By  choosing $\beta = 0, g=0, \boldsymbol{f} = \boldsymbol{0}, \text{ and } \alpha = \dfrac{1}{K}$ in equation \eqref{12a}, we conclude 
\begin{align*}
 &\dfrac{1}{2}\dfrac{d}{dt}\left(\left\| \boldsymbol{u}_n(t)\right\| _{L^{2}}^{2} + \hat \delta\left\| \boldsymbol{\nabla} C_n(t)\right\| _{L^{2}}^{2}\right)+ \dfrac{\mu}{K}\left\| \boldsymbol{u}_n(t)\right\| _{L^{2}}^{2}+d\hat \delta\left\|\Delta C_n(t) \right\| _{L^{2}}^{2} + \mu_{e} \left\| \boldsymbol{\nabla} \boldsymbol{u}_n(t)\right\| _{L^{2}}^{2} \leq 0 ~ ~ a.  e. ~ on ~ (0,T).   
\end{align*}
Integrating the above equality in time from $0$ to $\tau \in (0,T]$, we obtain
\begin{align}\label{32}
 &\dfrac{1}{2}\left(\left\| \boldsymbol{u}_n(t)\right\| _{L^{2}}^{2} + \hat \delta\left\| \boldsymbol{\nabla} C_n(t)\right\| _{L^{2}}^{2}\right)+ \dfrac{\mu}{K}\int_{0}^{\tau}\left\| \boldsymbol{u}_n(t)\right\| _{L^{2}}^{2}+d\hat \delta \int_{0}^{\tau}\left\|\Delta C_n(t) \right\| _{L^{2}}^{2} + \mu_{e}\int_{0}^{\tau} \left\| \boldsymbol{\nabla} \boldsymbol{u}_n(t)\right\| _{L^{2}}^{2} \nonumber\\ &\leq \dfrac{1}{2}\left(\left\| \boldsymbol{u}_n(0)\right\| _{L^{2}}^{2} + \hat \delta\left\| \boldsymbol{\nabla} C_n(0)\right\| _{L^{2}}^{2}\right) ,\hspace{5pt} \forall \tau > 0.  
 \end{align}
 From inequalities \eqref{31} and \eqref{32}, we have $\boldsymbol{u}_n$ uniformly bounded in  $ L^{2}(0,\infty;V_{1}) \cap L^{\infty}(0,\infty;S_{1})$  and $C_n$ is uniformly bounded in $ L^{2}(0,\infty;V_{2}) \cap  L^{\infty}(0,\infty;H^{1}) $. This fact, combined with all other previously shown results (since they remain valid $ \forall ~ T>0$) imply that, $\boldsymbol{u} \in L^{2}(0,\infty;V_{1}) ~\cap~ \mathcal{C}(\mathbb{R}^{+},S_{1})$  and $ C \in  L^{2}(0,\infty;V_{2}) ~\cap~ \mathcal{C}(\mathbb{R}^{+},H^{1})$.
\end{proof}

\section{Concluding remarks}
In this paper, we have demonstrated the existence and uniqueness of a weak solution to the unsteady Darcy-Brinkman problem in the presence of an external force field coupled with a convection-diffusion-reaction equation (incorporating a simple first-order reaction). The heterogeneity of the porous media is also taken into account by considering permeability as an inverse linear function of solute concentration. The validity of the proofs remains straightforward when considering non-reactive homogeneous porous media flow without any body forces, achieved by setting appropriate parameters to zero. One of the more challenging tasks for future research is to establish the well-posedness of the problem when the permeability exponentially decreases with concentration, as noted in \cite{Nagatsu2014}. Additionally, a more realistic approach would be to address the problem with variable fluid viscosity and higher-order reactions, as suggested in \cite{anne2020}.

\section*{Acknowledgments}
S.K. acknowledges the support of UGC, Government of India, with a research fellowship.

\bibliographystyle{plain}
\bibliography{references}
\end{document}